\newtheorem{theorem}{Theorem}[section]
\newtheorem{lemma}[theorem]{Lemma}
\theoremstyle{definition}
\newtheorem{remark}[theorem]{Remark}
\newcommand{\Nbd}{\operatorname{Nbd}}
\newcommand{\cl}{\operatorname{cl}}
\numberwithin{equation}{section}
\begin{document}

\title[Genus two Goeritz group of $\mathbb S^2 \times \mathbb S^1$]{The genus two Goeritz group of $\mathbb S^2 \times \mathbb S^1$}

\author{Sangbum Cho}\thanks{The first-named author is supported by Basic Science Research Program through the National Research Foundation of Korea(NRF) funded by the Ministry of Education, Science and Technology (2012006520).}
\address{Department of Mathematics Education, Hanyang University, Seoul 133-791,
Korea}
\email{scho@hanyang.ac.kr}

\author{Yuya Koda}\thanks{The second-named author is supported in part by
Grant-in-Aid for Young Scientists (B) (No. 20525167), Japan Society for the Promotion of Science, and by
JSPS Postdoctoral Fellowships for Research Abroad.}

\address{
Mathematical Institute \newline
\indent Tohoku University, Sendai, 980-8578, Japan \newline
\indent and \newline
\indent (Temporary) Dipartimento di Matematica  \newline
\indent Universit\`{a} di Pisa, Largo Bruno Pontecorvo 5, 56127 Pisa, Italy}
\email{koda@math.tohoku.ac.jp}

\subjclass[2000]{Primary 57N10, 57M60.}

\date{\today}

\begin{abstract}
The genus-$g$ Goeritz group is the group of isotopy classes of orientation-preserving homeomorphisms of a closed orientable $3$-manifold that preserve a given genus-$g$ Heegaard splitting of the manifold.
In this work, we show that the genus-$2$ Goeritz group of $\mathbb S^2 \times \mathbb S^1$ is finitely presented, and give its explicit presentation.
\end{abstract}

\maketitle

\section{Introduction}
\label{sec:intro}
Given a genus-$g$ Heegaard splitting of a closed orientable $3$-manifold, the genus-$g$ {\it Goeritz group} is the group of isotopy classes of orientation-preserving homeomorphisms of the manifold preserving each of the handlebodies of the splitting setwise.
When a manifold admits a unique Heegaard splitting of genus $g$ up to isotopy, we might define the genus-$g$ Goeritz group of the manifold without mentioning a specific splitting.
For example, the $3$-sphere, $\mathbb S^2 \times \mathbb S^1$ and lens spaces are known to be such manifolds from \cite{W}, \cite{B} and \cite{B-O}.

It is natural to study the structures of Goeritz groups and to ask if they are finitely generated or presented, and so finding their generating sets or presentations has been an interesting problem.
But the generating sets or the presentations of those groups have been obtained only for few manifolds with their splittings of small genus.
A finite presentation of the genus-$2$ Goeritz group of the $3$-sphere was obtained from the works of \cite{Ge}, \cite{Sc}, \cite{Ak} and \cite{C}, and recently of each lens space $L(p, 1)$ in \cite{C2}.
In addition, finite presentations of the genus-$2$ Goeritz groups of other lens spaces are given in \cite{C-K2}.
Also a finite generating set of the genus-$3$ Goeritz group of the $3$-torus was obtained in \cite{J}.
For the higher genus Georitz groups of the $3$-sphere and lens spaces, it is conjectured that they are all finitely presented but it is still known to be an open problem.

In this work, we show that the genus-$2$ Goeritz group of $\mathbb S^2 \times \mathbb S^1$ is finitely presented, by giving its explicit presentation as follows.

\begin{theorem}
The genus-$2$ Goeritz group $\mathcal G$ has the presentation
$$\langle ~\epsilon~ \rangle \oplus \langle ~\alpha ~|~ \alpha^2 =1~\rangle \oplus \langle ~ \beta, \gamma, \sigma ~|~ \gamma^2 = \sigma^2 = (\gamma \beta \sigma)^2 = 1 ~ \rangle.$$
\label{thm:presentation}
\end{theorem}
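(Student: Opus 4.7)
The plan is to exhibit $\mathcal{G}$ as the fundamental group of a graph of groups via Bass-Serre theory, following the template developed for the genus-$2$ Goeritz groups of lens spaces in \cite{C2, C-K2}. The key auxiliary object is a simplicial complex $\mathcal{R}$ on which $\mathcal{G}$ acts, whose vertices are isotopy classes of reducing $2$-spheres for the splitting --- essential $2$-spheres in $\mathbb S^2 \times \mathbb S^1$ that intersect the Heegaard surface $\Sigma$ in a single essential simple closed curve, and that therefore cut the given genus-$2$ splitting into a pair of genus-$1$ sub-splittings, one for an $\mathbb S^2 \times \mathbb S^1$ summand and one for an $\mathbb S^3$ summand. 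Edges of $\mathcal{R}$ join pairs of disjoint reducing spheres.

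Before analyzing the action, I would write down explicit orientation-preserving self-homeomorphisms of $(V, W; \Sigma)$ realizing the five named generators. The infinite-order element $\epsilon$ corresponds to sliding a handle once around the $\mathbb S^1$-factor of $\mathbb S^2 \times \mathbb S^1$; $\alpha$ is an order-two involution exchanging the two sides of the Heegaard surface; and $\beta, \gamma, \sigma$ are symmetries of the two-handle configuration supported near a fixed reducing sphere on the $\mathbb S^3$-side. A direct pictorial check should verify the relations $\alpha^2 = \gamma^2 = \sigma^2 = (\gamma \beta \sigma)^2 = 1$, as well as commutation of both $\epsilon$ and $\alpha$ with each of $\beta, \gamma, \sigma$, which is what licenses the decomposition into a direct sum of three factors.

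With the generators in hand, the proof reduces to three technical tasks: (i) show $\mathcal{R}$ is connected via a standard disk/sphere-surgery argument between any two reducing spheres; (ii) show $\mathcal{R}$ is in fact a tree by ruling out essential loops through an innermost-intersection analysis; (iii) compute the vertex and edge stabilizers for the action of $\mathcal{G}$ on $\mathcal{R}$. For a vertex $\mathcal{S} \in \mathcal{R}$, cutting along $\mathcal{S}$ reduces the stabilizer to a combination of the Goeritz-type groups of the two genus-$1$ sub-splittings on either side of $\mathcal{S}$, and the edge stabilizers arise as their intersection. Bass-Serre theory then packages $\mathcal{G}$ itself into a presentation; after identifying the resulting generators with $\epsilon, \alpha, \beta, \gamma, \sigma$ and applying Tietze transformations, the three factors of the target presentation should emerge naturally.

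The main obstacle is step (ii): proving $\mathcal{R}$ is a tree, rather than just connected. In the lens-space analogues this step relies on delicate bigon-elimination and uniqueness-type arguments on the intersections of reducing spheres, and here it is complicated both by the presence of non-separating $2$-spheres in $\mathbb S^2 \times \mathbb S^1$ and by the fact that vertex stabilizers are themselves infinite, owing to $\epsilon$. A related subtlety is verifying that the infinite-cyclic $\epsilon$-direction of the various vertex stabilizers assembles consistently into a single global $\langle \epsilon \rangle$-summand of $\mathcal{G}$, and that the tree action introduces no extra relators beyond those listed in the statement.
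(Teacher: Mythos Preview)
Your plan diverges from the paper's proof in a basic way, and as written it cannot be carried out.

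The paper does not use a complex of reducing spheres. It uses the \emph{primitive disk complex} $\mathcal{P}(V)$: the full subcomplex of the non-separating disk complex $\mathcal{D}(V)$ spanned by primitive disks in $V$. The key structural fact (Lemma~\ref{lem:reducing_disk_and_primitive_disk}) is that $V$ contains a \emph{unique} non-separating reducing disk $E_0$, and a non-separating disk is primitive if and only if it is disjoint from and not isotopic to $E_0$. Hence $\mathcal{P}(V)$ is exactly the link of the vertex $E_0$ inside $\mathcal{D}(V)$, and since links of vertices in $\mathcal{D}(V)$ are already known to be trees, $\mathcal{P}(V)$ is a tree for free---no bigon-elimination or innermost-sphere argument is needed. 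Bass--Serre theory is then applied to the barycentric subdivision $\mathcal{T}$ of $\mathcal{P}(V)$, and the stabilizers $\mathcal{G}_{\{E\}}$, $\mathcal{G}_{\{D\cup E\}}$, $\mathcal{G}_{\{D,E\}}$ are themselves computed via a second layer of Bass--Serre arguments, this time on sub-trees $\mathcal{P}_{\{E\}}(W)$ and $\mathcal{P}_{\{D,E\}}(W)$ of dual and common-dual disks in $W$. (Incidentally, the templates you cite, \cite{C2} and \cite{C-K2}, also use primitive disk complexes, not reducing spheres.)

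Your complex $\mathcal{R}$, by contrast, collapses before you reach step~(ii). If ``essential $2$-sphere in $\mathbb{S}^2\times\mathbb{S}^1$'' is taken literally, the only essential sphere is the non-separating $\mathbb{S}^2\times\{\mathrm{pt}\}$, and by the uniqueness of $E_0$ there is exactly one reducing sphere of this type up to isotopy; $\mathcal{R}$ is then a single vertex with stabilizer all of $\mathcal{G}$, which yields nothing. If instead you mean \emph{separating} reducing spheres (as your ``two genus-$1$ sub-splittings'' description indicates), recall that any two disjoint essential separating simple closed curves on a closed genus-$2$ surface are isotopic; hence two non-isotopic separating reducing spheres can never be made disjoint, and $\mathcal{R}$ has no edges at all---step~(i) already fails. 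Either way there is no useful tree for $\mathcal{G}$ to act on. Two smaller points: elements of $\mathcal{G}$ preserve each handlebody setwise, so $\alpha$ cannot exchange the two sides of $\Sigma$; in the paper $\alpha$ is the hyperelliptic involution of $\Sigma$. And $\epsilon$ is simply the Dehn twist about $\partial E_0=\partial E_0'$, not a handle slide around the $\mathbb{S}^1$ factor.
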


The generators $\alpha$, $\beta$, $\gamma$ and $\sigma$ are illustrated in Figure \ref{fig_generators} as orientation-preserving homeomorphisms of a Heegaard surface which can extend to homeomorphisms of the whole $\mathbb S^2 \times \mathbb S^1$.
The generator $\epsilon$ is the Dehn twist about the circle $\partial E_0$ in the figure.
In Section \ref{sec:primitive_disks}, we describe those generators in detail.

To find the presentation of $\mathcal G$, we generalize the method developed in \cite{C}.
We construct a tree on which the group $\mathcal G$ acts such that the quotient of the tree by the action is a single edge, and then find the presentations of the stabilizer subgroups of the edge and  each of end vertices.

Throughout the paper, we denote by $(V, W; \Sigma)$ the genus-$2$ Heegaard splitting of $\mathbb S^2 \times \mathbb S^1$.
That is, $V$ and $W$ are genus-$2$ handlebodies such that $V \cup W = \mathbb S^2 \times \mathbb S^1$ and $\partial V = \partial W = \Sigma$.
All disks in a handlebody are always assumed to be properly embedded and their intersections are transverse and minimal up to isotopy.
In particular, if two disks intersect each other, then the intersection is a collection of pairwise disjoint arcs that are properly embedded in each disk.
Finally, $\Nbd(X)$ will denote a regular neighborhood of $X$ and $\cl(X)$ the closure of $X$ for a subspace $X$ of a polyhedral space, where the ambient space will always be clear from the context.

\section{Primitive disks in a handlebody}
\label{sec:primitive_disks}

Recall that $V$ is a genus-$2$ handlebody in $\mathbb S^2 \times \mathbb S^1$ whose exterior is the handlebody $W$.
A non-separating disk $E_0$ in $V$ is called a {\it reducing disk} if there exists a disk $E'_0$ in $W$ such that $\partial E_0 = \partial E'_0$.
The disk $E'_0$ is also a reducing disk in $W$.
An essential disk $E$ in $V$ is called {\it primitive} if there exists an essential disk $E'$ in $W$ such that $\partial E$ meets $\partial E'$ in a single point.
Such a disk $E'$ is called a {\it dual disk} of $E$, and $E'$ is also primitive in $W$ with its dual disk $E$.
Primitive disks are necessarily non-separating.
We remark that $\cl(V - \Nbd(E))$ and $W \cup \Nbd(E)$ (and $V \cup \Nbd(E')$ and $\cl(W - \Nbd(E'))$, respectively) are solid tori, which form a genus-$1$ Heegaard splitting of $\mathbb S^2 \times \mathbb S^1$.
It follows that, for a meridian disk $E_0$ of the solid torus $\cl(V - \Nbd(E))$, there exists
a meridian disk $E'_0$ of the solid torus $W \cup \Nbd(E)$ satisfying $\partial E_0 = \partial E'_0$.
In particular, we can find such disks $E_0$ and $E'_0$ so that they are disjoint from the $3$-ball $\Nbd(E \cup E')$.
Therefore, once we have a primitive disk $E$ with its dual disk $E'$, there exist reducing disks $E_0$ in $V$ and $E'_0$ in $W$ disjoint from $E \cup E'$ such that $\partial E_0 = \partial E'_0$.
See Figure \ref{fig_splitting}.
\begin{figure}
\centering
\labellist
\pinlabel {\small $E_0$} [B] at 20 33
\pinlabel {\small $\partial E'$} [B] at 113 66
\pinlabel {\small $E$} [B] at 143 30

\pinlabel {\small $E'_0$} [B] at 233 33
\pinlabel {\small $\partial E$} [B] at 325 66
\pinlabel {\small $E'$} [B] at 356 30

\pinlabel {\large $V$} [B] at 171 5
\pinlabel {\large $W$} [B] at 390 5
\endlabellist
\includegraphics[width=0.8\textwidth]{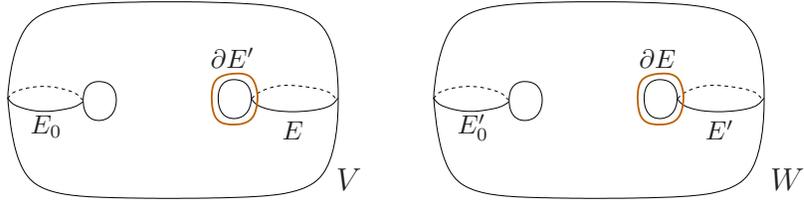}
\caption{
A genus two Heegaard splitting of $\mathbb S^2 \times \mathbb S^1$.}
\label{fig_splitting}
\end{figure}
We call a pair of disjoint, non-isotopic primitive disks in $V$ a {\it primitive pair} of $V$, and if a disk in $W$ is a dual disk of each the two disks of the pair, then call it simply a {\it common dual disk} of the pair.

\smallskip

We first introduce six elements $\alpha$, $\beta$, $\beta'$, $\gamma$, $\sigma$ and $\epsilon$ of the group $\mathcal G$,
which will turn out to form a generating set of $\mathcal G$.
These elements can be described as orientation-preserving homeomorphisms of the surface
$\Sigma$ which extend to homeomorphisms of the whole $\mathbb S^2 \times \mathbb S^1$ preserving each of $V$ and $W$ setwise.
Fix a primitive pair $\{D, E\}$ of $V$ and a primitive pair $\{D', E'\}$ of common dual disks of $D$ and $E$.
(The existence of such disks will be shown in Lemmas \ref{lem:common_dual} and \ref{lem:two_common_duals}.)
Figures \ref{fig_generators} (a) and (b) illustrate these disks with the unique reducing disks $E_0$ in $V$ and $E'_0$ in $W$ disjoint from them.
Notice that $\partial E_0 = \partial E'_0$. 
(The existence and uniqueness of such reducing disks will be shown in Lemma \ref{lem:reducing_disk_and_primitive_disk}.)

\begin{figure}
\centering
\labellist
\pinlabel {\small $E_0$} [B] at 31 252
\pinlabel {\small $D$} [B] at 83 252
\pinlabel {\small $E$} [B] at 140 252
\pinlabel {\small $E'_0$} [B] at 256 252
\pinlabel {\small $D'$} [B] at 310 250
\pinlabel {\small $E'$} [B] at 365 252
\pinlabel {\small $\partial E'$} [B] at 113 245
\pinlabel {\small $\partial E$} [B] at 340 245
\pinlabel {\small $\partial D'$} [B] at 67 232
\pinlabel {\small $\partial D$} [B] at 294 232
\pinlabel {\small $\alpha$} [B] at 180 265
\pinlabel {\small $\beta'$} [B] at 222 263
\pinlabel {\small $\beta$} [B] at 406 263
\pinlabel {\small $E_0$} [B] at 32 85
\pinlabel {\small $D$} [B] at 107 87
\pinlabel {\small $E$} [B] at 146 87
\pinlabel {\small $E'_0$} [B] at 253 84
\pinlabel {\small $D'$} [B] at 326 84
\pinlabel {\small $E'$} [B] at 366 84
\pinlabel {\small $\alpha$} [B] at 184 98
\pinlabel {\small $\alpha$} [B] at 402 96
\pinlabel {\small $\sigma$} [B] at 125 20
\pinlabel {\small $\gamma$} [B] at 344 17
\pinlabel {\Large $V$} [B] at 160 220
\pinlabel {\Large $W$} [B] at 390 220
\pinlabel {\Large $V$} [B] at 152 41
\pinlabel {\Large $W$} [B] at 372 40
\pinlabel {\large (a)} [B] at 87 200
\pinlabel {\large (b)} [B] at 313 200
\pinlabel {\large (c)} [B] at 87 12
\pinlabel {\large (d)} [B] at 313 12
\endlabellist
\includegraphics[width=1\textwidth]{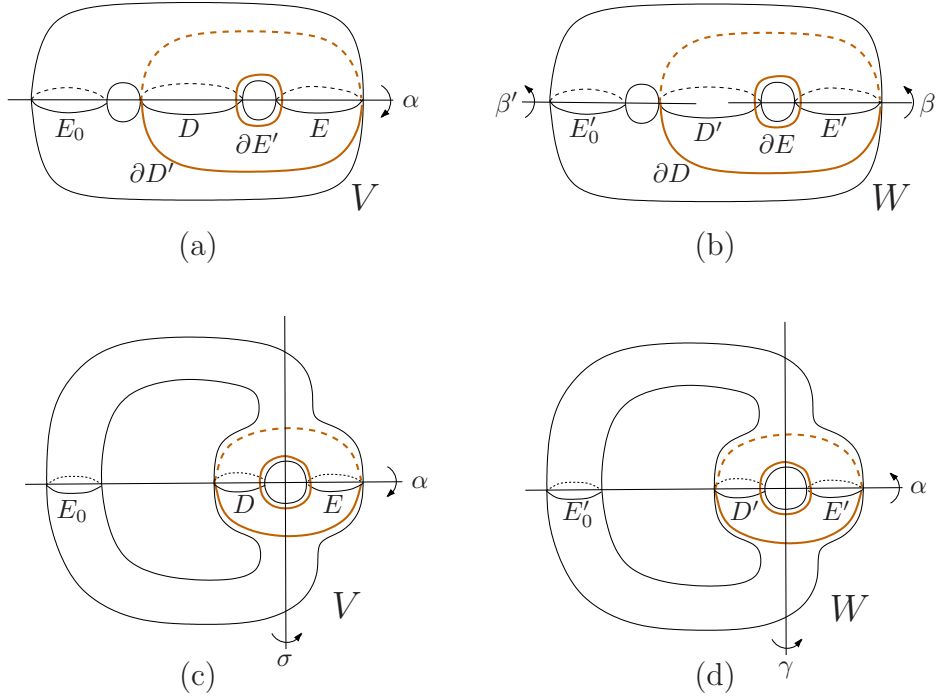}
\caption{
The elements of the Goeritz group $\mathcal G$.}
\label{fig_generators}
\end{figure}

In Figure \ref{fig_generators} (a), the order two element $\alpha$ of $\mathcal G$ is described as a hyperelliptic involution of $\Sigma$.
The elements $\beta$ and $\beta'$ described in Figure \ref{fig_generators} (b) are
half-Dehn twists about a separating loop in $\Sigma$
disjoint from $\partial E \cup \partial E'$ and from $\partial E_0 = \partial E'_0$.
This loop meets each of
$\partial D$ and $\partial D'$ transversely at two points,
and bounds a disk in each of $V$ and $W$. The elements $\beta$ and $\beta'$ have infinite order and satisfy $\beta \beta' = \alpha$.
The order two element $\gamma$ in Figure \ref{fig_generators} (d) exchanges $D'$ and $E'$ while fixing each of $E$ and $E_0$.
Notice that $D$ and $\gamma(D) (= \beta (D))$ are the two unique common dual disks of the primitive pair $\{D', E'\}$ disjoint from $E$. (The uniqueness will be shown in Lemma \ref{lem:two_common_duals}.)
In the same manner, the order two element $\sigma$ in Figure \ref{fig_generators} (c) exchanges $D$ and $E$ and fixes each of $E'$ and $E'_0$.
Also, $D'$ and $\sigma(D') (= \beta^{-1}(D'))$ are the two unique common dual disks of the primitive pair $\{D, E\}$ disjoint from $E'$.
Finally, the element $\epsilon$ is a Dehn twist about $\partial E_0 = \partial E'_0$.
Notice that all of those elements preserve each of $E_0$ and $E'_0$.
In addition, we define $\tau = \gamma \beta$ and $\tau' = \tau \sigma$ for later in the argument.
The element $\tau$ has infinite order and preserves each of $D$ and $E$, but sends $E'$ to $D'$.
The element $\tau'$ has order two, and exchanges each of $D$ and $E$, and $D'$ and $E'$ respectively.

\smallskip

In the remaining of the section, we develop several properties of primitive disks and reducing disks, which will be used in the next sections.
The boundary circle of any essential disk in $V$ represents an element of $\pi_1(W)$, a free group of rank two.
In particular, if $E_0$ is a reducing disk in $V$, then $\partial E_0$ represents the trivial element of $\pi_1(W)$.
For primitive disks, we have the following intepretation, which is a direct consequence of \cite{Go}.

\begin{lemma}
Let $E$ be a non-separating disk in $V$.
Then $E$ is primitive if and only if $\partial E$ represents a primitive element of $\pi_1(W)$.
\label{lem:primitive}
\end{lemma}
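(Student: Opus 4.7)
The plan is to prove the two implications separately, with the forward direction a direct computation in the free group $\pi_1(W)$ and the converse obtained by invoking the result of Gordon \cite{Go}.

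For the forward direction, assume $E$ is primitive and choose a dual disk $E'$ in $W$ with $|\partial E \cap \partial E'|=1$. Since $E'$ is itself primitive, it is non-separating in $W$, so cutting $W$ along $E'$ yields a solid torus $W_0$. I would exploit the resulting HNN-style decomposition $\pi_1(W) \cong F(x,y)$, where $y$ generates $\pi_1(W_0)$ (a core of the solid torus) and $x$ is represented by a loop crossing $E'$ once transversely. The single intersection point of $\partial E$ with $\partial E'$ means that, after cutting, $\partial E$ becomes an arc in $\partial W_0$ joining the two copies of $E'$. It follows that the conjugacy class of $\partial E$ in $\pi_1(W)$ has the form $y^n x$ for some integer $n$. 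A Nielsen transformation shows that $\{y^n x,\, y\}$ is a basis of $F(x,y)$, hence $\partial E$ represents a primitive element.

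For the converse, suppose $\partial E$ represents a primitive element of $\pi_1(W)$. Here I would invoke Gordon's theorem \cite{Go}, which guarantees that any simple closed curve on the boundary of a handlebody representing a primitive element of its fundamental group meets some properly embedded disk of the handlebody transversely in a single point. Applying this to $\partial E$ and $W$ produces a disk $E' \subset W$ with $|\partial E \cap \partial E'|=1$, so $E'$ is a dual disk for $E$ and hence $E$ is primitive by definition.

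The only nontrivial input is Gordon's theorem, which converts algebraic primitivity in $\pi_1(W)$ into the geometric datum of a disk dual to $\partial E$; everything else is a routine computation after cutting along the dual disk.
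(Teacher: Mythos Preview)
Your proposal is correct and aligns with the paper's approach: the paper gives no written proof at all, simply asserting that the lemma ``is a direct consequence of \cite{Go}''. You make the same appeal to Gordon for the converse, and your explicit forward argument (reading $\partial E$ as $y^n x$ after cutting along the dual disk) is a correct elaboration of the easy direction that the paper leaves implicit.
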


Let $F$ and $G$ be essential disks in $W$ such that $F \cup G$ cuts $W$ into a $3$-ball.
Assign symbols $x$ and $y$ to $\partial F$ and $\partial G$ respectively after fixing orientations of $\partial F$ and $\partial G$.
Let $l$ be any oriented simple closed curve in $\partial W$ such that $l$ intersects $\partial F \cup \partial G$ transversally and minimally.
Then $l$ determines a word in terms of $x$ and $y$ which can be read off from the intersections of $l$ with $\partial F$ and $\partial G$.
Thus $l$ represents an element of the free group $\pi_1(W) = \langle x, y\rangle$.
In this set up, the following is a simple criterion for triviality and primitiveness of the elements represented by $l$, which is found in Lemma 2.2 in \cite{C} with its proof.

\begin{lemma}
If a word determined by $l$ contains a sub-word of the form $yxy^{-1}$ after a suitable choice of orientations, then $l$ represents a non-trivial, non-primitive element of $\pi_1(W)$.
\label{lem:non_primitive}
\end{lemma}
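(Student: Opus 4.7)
The plan is to combine a minimality argument with Whitehead's characterization of primitive elements in $F_2$. First, because $l$ meets $\partial F \cup \partial G$ transversely and minimally, the cyclic word $w$ read off from $l$ is cyclically reduced: a cyclically adjacent pair $aa^{-1}$ would produce an innermost bigon in $\partial W$ between $l$ and the disk boundary realizing $a$, and eliminating that bigon by an ambient isotopy of $\partial W$ would lower $|l \cap (\partial F \cup \partial G)|$, contradicting minimality. Since $w$ contains the reduced length-three subword $yxy^{-1}$, the cyclically reduced word $w$ is non-empty, and so represents a nontrivial conjugacy class in $\pi_1(W) = \langle x, y \rangle$. This handles non-triviality.

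For non-primitivity I would use the Whitehead graph $\Gamma(w)$: the vertex set is $\{x, x^{-1}, y, y^{-1}\}$, and for each cyclically consecutive pair $ab$ of letters of $w$ one adds an edge joining $a^{-1}$ to $b$. The displayed subword $yxy^{-1}$ immediately contributes the two edges $y^{-1}$--$x$ (from the adjacency $yx$) and $x^{-1}$--$y^{-1}$ (from $xy^{-1}$). Cyclic reducedness then forces the letter immediately preceding the displayed $y$ and the letter immediately following the displayed $y^{-1}$ each to lie in $\{x, x^{-1}\}$; these contribute two further edges, each joining $y$ to some vertex in $\{x, x^{-1}\}$. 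In every one of the resulting sign configurations, a short direct check shows that $\Gamma(w)$ is connected after the removal of any single vertex, so $\Gamma(w)$ has no cut vertex. Whitehead's theorem for $F_2$ asserts that a cyclically reduced primitive word of length greater than one must have a cut vertex in its Whitehead graph; hence $w$ is not primitive.

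The main obstacle is the case analysis inside the Whitehead graph: one has to verify that removing any single vertex leaves the remaining graph connected in each of the (at most) four sign configurations for the letters flanking $yxy^{-1}$. This is elementary but unavoidable, and is really where the strength of the hypothesis is consumed — the subword $yxy^{-1}$ forces both $y$ and $y^{-1}$ to be linked to $\{x, x^{-1}\}$, producing enough edges between the $x$-vertices and $y$-vertices to prevent any cut vertex. Everything else — cyclic reducedness from minimality, non-triviality, and the invocation of Whitehead's theorem — is standard.
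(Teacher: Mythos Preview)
Your reduction to a cyclically reduced word and the non-triviality conclusion are correct and match the paper. The gap is in the Whitehead-graph step.

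First, the sentence ``cyclic reducedness then forces the letter immediately preceding the displayed $y$ and the letter immediately following the displayed $y^{-1}$ each to lie in $\{x, x^{-1}\}$'' is false: cyclic reducedness only rules out $y^{-1}$ before $y$ and $y$ after $y^{-1}$, so those flanking letters may equally well be $y$ and $y^{-1}$. The cyclically reduced word $y^{2}xy^{-2}x$ contains $yxy^{-1}$ with flanking letters $y$ and $y^{-1}$, and is not covered by your four cases.

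Second, even granting your hypothesis on the flanking letters, the four-case check does not go through. In the configuration where the letter before $y$ is $x^{-1}$ and the letter after $y^{-1}$ is $x$, the edges you list are $y^{-1}\!-\!x$, $y^{-1}\!-\!x^{-1}$, and a double edge $y\!-\!x$; removing $y^{-1}$ from this subgraph isolates $x^{-1}$. So with only the local information around $yxy^{-1}$ you cannot conclude there is no cut vertex; one has to use global information about the rest of $w$ (e.g.\ trace maximal $y$- and $y^{-1}$-runs to their endpoints and run a parity argument on the $x$-exponents) to produce the missing edges. This can be made to work, but it is a genuinely longer argument than the one you sketch.

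The paper (following \cite{C}) bypasses the Whitehead graph entirely. After establishing that $w$ is cyclically reduced, it invokes the classical fact that a cyclically reduced primitive element of $\langle x, y\rangle$ never contains both $y$ and $y^{-1}$; since $yxy^{-1}$ exhibits both, non-primitivity follows immediately. This is shorter and avoids the case analysis altogether.
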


The idea of the proof is that, once a word determined by $l$ contains $yxy^{-1}$, any word determined by $l$ is cyclically reduced and so nonempty, and any cyclically reduced word containing both $y$ and $y^{-1}$ cannot represent a primitive element of $\langle x, y \rangle$.

Let $D$ and $E$ be essential disks in $V$, and suppose that $D$ intersects $E$ transversely and minimally.
Let $C \subset D$ be a disk cut off from $D$ by an outermost arc $\beta$ of $D \cap E$ in $D$ such that $C \cap E= \beta$.
We call such a $C$ an {\it outermost sub-disk} of $D$ cut off by $D \cap E$.
The arc $\beta$ cuts $E$ into two disks, say $H$ and $K$.
Then we have two essential disks $E_1$ and $E_2$ in $V$ which are isotopic to disks $H \cup C$ and $K \cup C$ respectively.
We call $E_1$ and $E_2$ the {\it disks from surgery} on $E$ along the outermost sub-disk $C$ of $D$ cut off by $D \cap E$.
Observe that each of $E_1$ and $E_2$ has fewer arcs of intersection with $D$ than $E$ had, since at least the arc $\beta$ no longer counts.

Since $E$ and $D$ are assumed to intersect minimally, $E_1$ (and $E_2$) is isotopic to neither $E$ nor $D$.
In particular, if $E$ is non-separating, then the resulting disks $E_1$ and $E_2$ are both non-separating and they are not isotopic to each other
because, after isotopying $E_1$ and $E_2$ away from $E$, both of them are meridian disks of the solid torus $V$ cut off by $E$, and the boundary circles $\partial E_1$ and $\partial E_2$ are not isotopic to each other in the two holed torus $\partial V$ cut off by $\partial E$.

\begin{lemma}
Let $E_0$ be a reducing disk in $V$ disjoint from $E \cup E'$, where $E$ is a primitive disk in $V$ and $E'$ is a dual disk of $E$.
Let $D$ be any non-separating disk in $V$ which is not isotopic to $E_0$.
\begin{enumerate}
\item If $D$ is disjoint from $E_0$, then $D$ is a primitive disk, and hence is not a reducing disk.
\item If $D$ intersects $E_0$, then $D$ is neither a reducing disk nor a primitive disk.
\end{enumerate}
\label{lem:non_primitive_2}
\end{lemma}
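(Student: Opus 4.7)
The plan is to reduce both parts to properties of $[\partial D] \in \pi_1(W)$ and apply Lemmas \ref{lem:primitive} and \ref{lem:non_primitive}. I use $\{E', E_0'\}$ as a cut system of $W$ (which it is: $E_0'$ is a meridian of the solid torus $W \cup \Nbd(E)$ by the setup, and after cutting $W$ along the primitive disk $E'$ the disk $E_0'$ again becomes a meridian of the resulting solid torus), so $\pi_1(W) = \langle x, y \rangle$ with $x,y$ dual to $E_0', E'$ respectively. In this basis $[\partial E_0']=1$ and $[\partial E]=y^{\pm 1}$ is primitive.

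For part (1), let $V' = \cl(V \setminus \Nbd(E_0))$, a solid torus, and $E_0^+, E_0^-$ the two copies of $E_0$ appearing on the torus $\partial V'$. Since $D \cap E_0 = \emptyset$ and $D \not\sim E_0$, the disk $D$ sits in $V'$. The key step is to show $D$ is non-separating in $V'$: if it were separating, $\partial D$ would bound a disk $\Delta$ on the torus $\partial V'$, and analysing the three sub-cases by whether $\Delta$ contains zero, exactly one, or both of $E_0^\pm$ produces contradictions with, respectively, $D$ being non-separating in $V$ (zero: $\partial D$ would bound a disk in $\Sigma$, making $D$ trivial), $D \not\sim E_0$ (one: $\partial D$ and $\partial E_0$ would cobound an annulus in $\Sigma$), and $\partial D$ being non-separating in $\Sigma$ (both: $\partial D$ would bound a once-punctured torus in $\Sigma$). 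Hence $D$ is a meridian of $V'$; the same three-case analysis applied to $E$ shows $E$ is also a meridian of $V'$. Any two meridians of a solid torus are isotopic, so $D \sim E$ in $V$, giving $[\partial D] = [\partial E]$ up to conjugation and inversion. This element is primitive, so Lemma \ref{lem:primitive} yields that $D$ is primitive, and a primitive disk cannot be reducing since its boundary is non-trivial in $\pi_1(W)$.

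For part (2), I read off the word $w \in \langle x, y \rangle$ representing $[\partial D]$ from oriented intersections of $\partial D$ with $\partial E_0' \cup \partial E'$ on $\Sigma$. Two observations drive the argument. First, a local orientation calculation (using the intersection tangent $n_D \times n_{E_0}$) shows that the two endpoints of any arc of $D \cap E_0$ cross $\partial E_0'$ with opposite algebraic signs on $\Sigma$, so they always contribute a balanced pair $x, x^{-1}$ to $w$. Second, the standard minimal-position theory for essential disks in a handlebody lets me arrange $\partial D$ to be in minimal position simultaneously with $\partial E_0'$ and with $\partial E'$ on $\Sigma$, so no bigons appear and $w$ is cyclically reduced. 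Since $|D \cap E_0|\ge 1$, the cyclically reduced word $w$ contains both $x$ and $x^{-1}$, and by the sketch accompanying Lemma \ref{lem:non_primitive}—that any cyclically reduced word in $\langle x,y\rangle$ containing a generator together with its inverse is non-trivial and non-primitive—$[\partial D]$ is non-trivial and non-primitive in $\pi_1(W)$. Hence $D$ is not primitive by Lemma \ref{lem:primitive}, and not reducing since $[\partial D] \ne 1$.

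The main obstacle will be the minimal-position input in part (2): confirming that minimality of $D \cap E_0$ in $V$ propagates to joint minimality of the relevant boundary curves on $\Sigma$, so that $w$ is genuinely cyclically reduced. The case analysis in part (1) and the sign computation in part (2) are routine once correctly set up.
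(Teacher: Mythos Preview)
Your Part (1) contains a genuine error: the step ``any two meridians of a solid torus are isotopic, so $D \sim E$ in $V$'' is false. Two meridian disks of $V'$ are indeed isotopic in $V'$, but such an isotopy may drag $\partial D$ across the caps $E_0^\pm \subset \partial V'$, and these caps are not in $\partial V$; hence the isotopy need not be an isotopy of properly embedded disks in $V$. Concretely, the band sum of $E$ and $E_0$ along any arc in $\Sigma$ is a non-separating disk $F$ in $V$ disjoint from $E_0$ and not isotopic to $E$ (nor to $E_0$); both $E$ and $F$ are meridians of $V'$. So your argument, taken literally, would force every non-separating disk disjoint from $E_0$ to be isotopic to $E$, which is absurd. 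The approach can be repaired: since $\partial D \cap \partial E_0' = \emptyset$ the word for $[\partial D]$ lies in $\langle y\rangle$, and since $\partial D$ and $\partial E$ are homologous (up to sign) on the torus $\partial V'$ their algebraic intersection with $\partial E' \subset \partial V'$ agrees, forcing $[\partial D]=y^{\pm 1}$. The paper takes a different route entirely: it constructs a dual disk for $D$ explicitly, by iterated band sums of $E'_0$ and $E'$ along sub-arcs that cut down the intersection with (an arc determining) $\partial D$.

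Your Part (2) also has a gap at exactly the point you flag. The implication ``no bigons $\Rightarrow$ $w$ cyclically reduced'' is not automatic: an arc of $\partial D$ in the $4$-holed sphere running from the boundary circle $e_0^+$ back to $e_0^+$ and separating off $e^+$ yields consecutive letters $x\,x^{-1}$, yet the region it cuts off, viewed back in $\Sigma$, is an annulus with core $\partial E'$, not a bigon---so minimal position with $\partial E_0' \cup \partial E'$ does not by itself forbid it. Ruling out such arcs for $\partial D$ requires real work that you have not supplied. The paper sidesteps this entirely: it takes an \emph{outermost} sub-disk $C_0$ of $D$ cut off by $D\cap E_0$, then uses the band-sum construction from Part (1) to replace $E,E'$ by $\widehat E,\widehat E'$ so that $C_0$ misses $\widehat E$ and meets $\partial\widehat E'$ exactly once. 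The boundary arc of $C_0$ then visibly contributes a subword $y\,x\,y^{-1}$ (your observation (a) supplies the opposite signs on the two $y$'s), and Lemma~\ref{lem:non_primitive} applies directly without ever needing the full word to be cyclically reduced.
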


\begin{proof}
Let $E'_0$ be a reducing disk in $W$ such that $\partial E_0 = \partial E'_0$.
Then $E'_0$ is also disjoint from $E \cup E'$.
Let $\Sigma'$ be the $4$-holed sphere obtained by cutting $\Sigma$ along $\partial E_0$ and $\partial E$.
We note that $\partial E'$ is an arc in $\Sigma'$ connecting two holes coming from $E$.

\smallskip
\begin{figure}
\centering
\labellist
\pinlabel {\tiny $\partial E$} [B] at 114 198
\pinlabel {\tiny $\partial E$} [B] at 114 121
\pinlabel {\tiny $\partial E$} [B] at 371 198
\pinlabel {\tiny $\partial E$} [B] at 372 120
\pinlabel {\scriptsize $\partial E'$} [B] at 125 138
\pinlabel {\scriptsize $\partial E''$} [B] at 385 138
\pinlabel {\small $\partial E_0$} [B] at 115 82
\pinlabel {\small $\partial E_0$} [B] at 373 82
\pinlabel {\small $\partial E_0$} [B] at 193 40
\pinlabel {\small $\partial E_0$} [B] at 453 40
\pinlabel {\footnotesize $\alpha$} [B] at 179 90
\pinlabel {\footnotesize $\alpha$} [B] at 436 90
\pinlabel {\small $\partial D$} [B] at 164 168
\pinlabel {\small $\partial D$} [B] at 423 167
\pinlabel {\large (a)} [B] at 118 1
\pinlabel {\large (b)} [B] at 378 1
\endlabellist
\includegraphics[width=1 \textwidth]{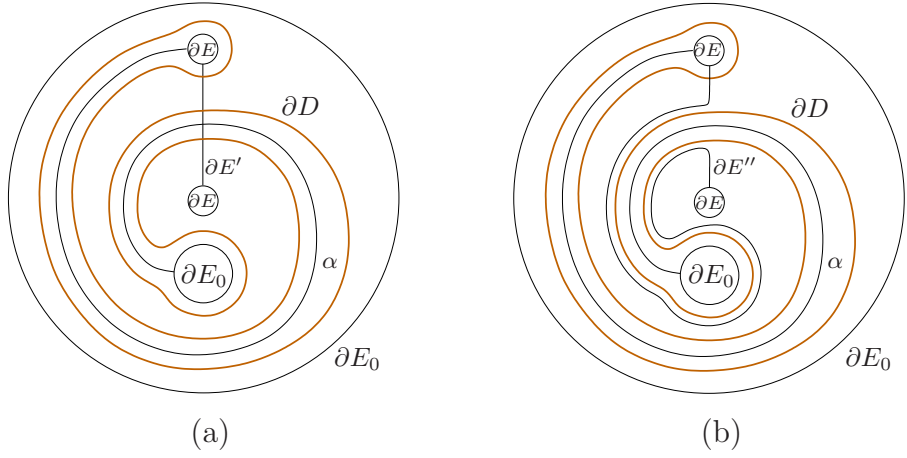}
\caption{
The $4$-holed sphere $\Sigma'$.}
\label{fig_4_holed_sphere_1}
\end{figure}
\noindent (1) Suppose that $D$ is disjoint from $E_0$.
Consider first the case that $D$ is also disjoint from $E$.
Then $D$ is determined by an arc $\alpha$ properly embedded in $\Sigma'$ connecting the holes coming from $\partial E_0$ and $\partial E$.
That is, $\partial D$ is the frontier of a regular neighborhood of the union of $\alpha$ and the two holes connected by $\alpha$ in $\Sigma'$.
See Figure \ref{fig_4_holed_sphere_1} (a).
If $\alpha$ intersects $\partial E'$, then take the sub-arc of $\alpha$ which connects $\partial E'_0 ( = \partial E_0)$ and $\partial E'$ and whose interior is disjoint from $\partial E'$.
Then the band sum of $E'_0$ and $E'$ along this sub-arc is a non-separating disk, denoted by $E''$, in $W$.
We observe that $|\alpha \cap \partial E''| < |\alpha \cap \partial E'|$, and $E''$ is again a dual disk of $E$ and is disjoint from $E_0$.
See Fig \ref{fig_4_holed_sphere_1} (b).
We can repeat this process until we find a dual disk $\widehat{E}'$ of $E$ disjoint from $E_0$ so that $\alpha$ is disjoint from $\widehat{E}'$.
Then $\partial D$ intersects $\partial \widehat{E}'$ in a single point, which implies that $D$ is
primitive.

Next, suppose that $D$ intersects $E$.
Let $C$ be any outermost sub-disk of $D$ cut off by $D \cap E$.
Then $C \cap \Sigma'$ is an arc properly embedded in $\Sigma'$ whose end points lie in a single hole coming from $\partial E$.
The arc $C \cap \Sigma'$ is determined by an arc, say $\beta$, in $\Sigma'$ connecting the holes coming from $\partial E_0$ and $\partial E$ each.
Similarly to the case of the arc $\alpha$, the arc $C \cap \Sigma'$ is the frontier of a regular neighborhood of the union of $\beta$ and the hole coming from $\partial E_0$ which $\beta$ ends in.

If $\beta$ intersects $\partial E'$, then we repeat the band sum constructions along the sub-arc of $\beta$ connecting $\partial E_0$ and $\partial E'$ to find a dual disk $\widehat{E}'$ of $E$ such that $\widehat{E}'$ is disjoint from $\beta$ and from $\partial E_0$.
Then the arc $C \cap \Sigma'$ is also disjoint from $\widehat{E}'$.
One of the disks from surgery on $E$ along $C$ is $E_0$, and the other one, say $E_1$, intersects $\widehat{E}'$ in a single point.
See Figure \ref{fig_4_holed_sphere_2} (a).
That is, $E_1$ is a primitive disk in $V$ with the dual disk $\widehat{E}'$.
The disk $E_1$ is disjoint from $E_0$, and further we have $|D \cap E_1|<|D \cap E|$.
We repeat the process to find a new primitive disk disjoint from $D$, and also from $E_0$, which has a dual disk disjoint from $E'_0$.
Then we go back to the first case.

\begin{figure}
\centering
\labellist
\pinlabel {\footnotesize $\partial E$} [B] at 113 207
\pinlabel {\footnotesize $\partial E$} [B] at 112 154
\pinlabel {\footnotesize $\partial \widehat{E}$} [B] at 368 207
\pinlabel {\footnotesize $\partial \widehat{E}$} [B] at 368 153
\pinlabel {\small $\partial E_0$} [B] at 113 98
\pinlabel {\small $\partial E_0$} [B] at 368 98
\pinlabel {\small $\partial E_0$} [B] at 193 58
\pinlabel {\small $\partial E_0$} [B] at 450 58
\pinlabel {\scriptsize $\partial \widehat{E}'$} [B] at 123 173
\pinlabel {\scriptsize $\partial \widehat{E}'$} [B] at 379 173
\pinlabel {\footnotesize $C \cap \Sigma'$} [B] at 125 128
\pinlabel {\footnotesize $C_0 \cap \Sigma'$} [B] at 432 134
\pinlabel {\small $\partial E_1$} [B] at 182 119
\pinlabel {\tiny $x$} [B] at 363 189
\pinlabel {\tiny $y$} [B] at 345 95
\pinlabel {\tiny $y^{-1}$} [B] at 395 95
\pinlabel {\large (a)} [B] at 118 5
\pinlabel {\large (b)} [B] at 378 5
\endlabellist
\includegraphics[width=1 \textwidth]{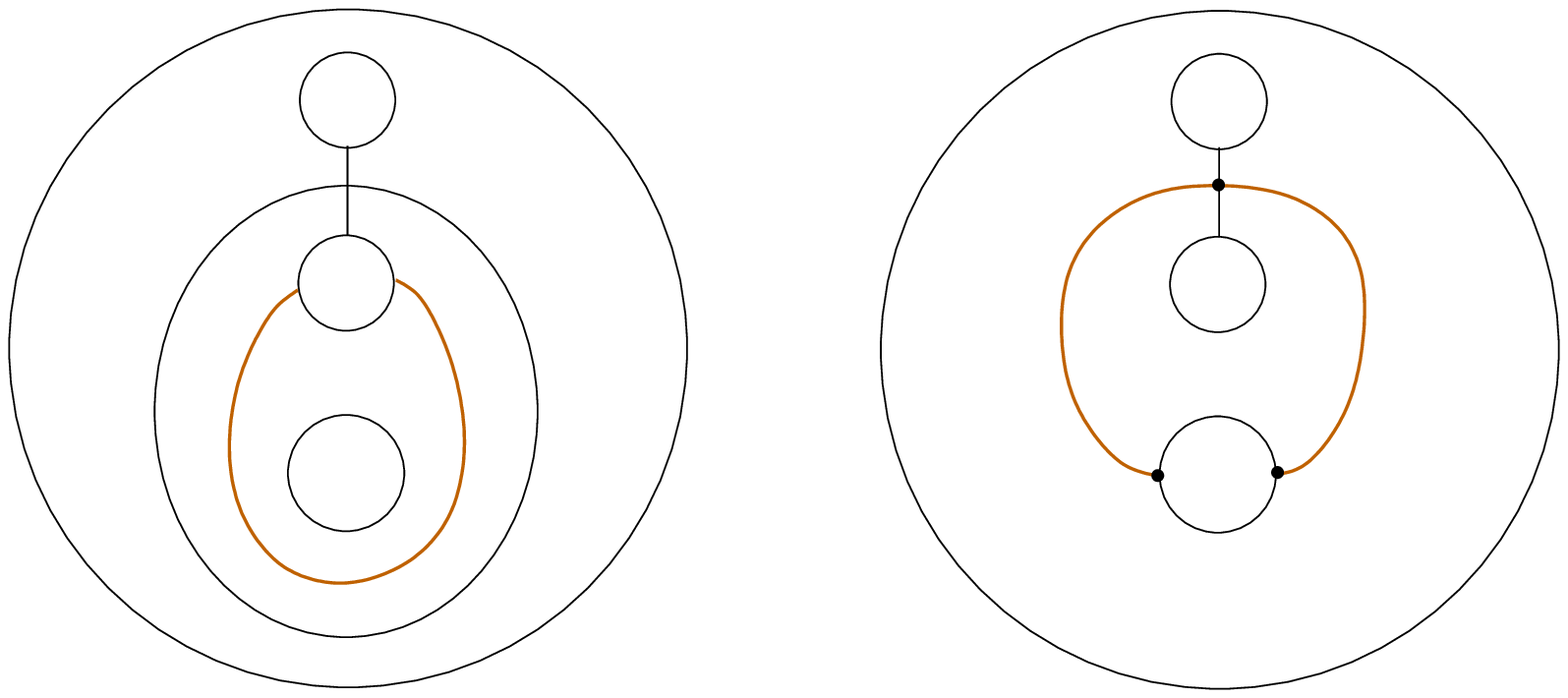}
\caption{
(a) The $4$-holed sphere $\Sigma'$. (b) The $4$-holed sphere obtained by cutting $\Sigma$ along $\partial \widehat{E}$ and $\partial E_0$.}
\label{fig_4_holed_sphere_2}
\end{figure}

\smallskip

\noindent (2) Suppose that $D$ intersects $E_0$.
Let $C_0$ be an outermost sub-disk of $D$ cut off by $D \cap E_0$.
If $C_0$ intersects $E$, the same argument of (1) for the sub-disk $C_0$ (instead of the disk $D$ in (1)) enables us to find a new primitive disk $\widehat{E}$ and its dual disk $\widehat{E}'$ such that $\widehat{E}$ and $\widehat{E}'$ are disjoint from $E_0$ and $E_0'$, and $C_0$ is disjoint from $\widehat{E}$ and intersects $\widehat{E}'$ in a single point.

Giving symbols $x$ and $y$ to the oriented circles $\partial \widehat{E}'$ and $\partial E'_0$,
respectively, the boundary circle $\partial D$ of $D$ determines a word in terms of $x$ and $y$.
In particular, there exists a word determined by $\partial D$ containing $yxy^{-1}$ which is determined by the sub-arc $C_0 \cap \Sigma'$ (after changing orientations if necessary).
See Figure \ref{fig_4_holed_sphere_2} (b).
By Lemma \ref{lem:non_primitive}, $D$ is neither a reducing disk nor a primitive disk.
\end{proof}

In the proof of Lemma \ref{lem:non_primitive_2}, we see that if a non-separating disk $D$ in $V$ is not isotopic to $E_0$, then $\partial D$ represents a non-trivial element of $\pi_1(W)$.
Thus the reducing disk $E_0$ is the unique non-separating disk in $V$ such that $\partial E_0$ represents a trivial element of $\pi_1(W)$ up to isotopy.
The following is also a direct consequence of Lemma \ref{lem:non_primitive_2}.

\begin{lemma}
There exists a unique non-separating reducing disk $E_0$ in $V$.
A non-separating disk $E$ in $V$ is primitive if and only if $E$ is not isotopic to and disjoint from $E_0$.
\label{lem:reducing_disk_and_primitive_disk}
\end{lemma}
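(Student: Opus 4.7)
The plan is to derive this lemma directly from Lemma \ref{lem:non_primitive_2} together with the paragraph of remarks that precedes it. Existence of a non-separating reducing disk is already built into the setup at the beginning of Section \ref{sec:primitive_disks}, so the two things left to establish are (i) uniqueness of such a disk and (ii) the primitivity characterization. I would handle them in that order.

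For uniqueness, I would let $\widetilde{E}_0$ be an arbitrary non-separating reducing disk in $V$ and aim to show $\widetilde{E}_0$ is isotopic to $E_0$. By definition, $\partial \widetilde{E}_0$ bounds a disk in $W$, so it represents the trivial element of $\pi_1(W)$. The remark recorded immediately after Lemma \ref{lem:non_primitive_2} states that $E_0$ is the \emph{only} non-separating disk in $V$ whose boundary is trivial in $\pi_1(W)$ (up to isotopy), and that is exactly what is needed: $\widetilde{E}_0$ must be isotopic to $E_0$.

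For the characterization of primitivity, the backward direction is simply a restatement of Lemma \ref{lem:non_primitive_2}(1). For the forward direction, I would assume $E$ is primitive and argue that (a) $E$ is not isotopic to $E_0$ and (b) $E$ is disjoint from $E_0$. Part (a) follows because Lemma \ref{lem:primitive} says $\partial E$ represents a primitive (in particular non-trivial) element of $\pi_1(W)$, whereas $\partial E_0$ is trivial in $\pi_1(W)$, so the two boundaries are not isotopic in $\Sigma$. Part (b) is a contrapositive application of Lemma \ref{lem:non_primitive_2}(2): if $E$ met $E_0$, then $E$ would fail to be primitive, contradicting the hypothesis.

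The main obstacle is essentially nonexistent for this lemma specifically, since all the substantive work (the outermost-arc surgery and the band-sum argument producing a dual disk disjoint from $E_0$, or the $yxy^{-1}$ sub-word in Lemma \ref{lem:non_primitive}) has already been carried out in Lemma \ref{lem:non_primitive_2}. What I would be careful about is just matching the cases of Lemma \ref{lem:non_primitive_2} (intersects $E_0$ versus disjoint but non-isotopic to $E_0$) against the two things to prove, and invoking Lemma \ref{lem:primitive} at the one place where I need to rule out $\partial E$ being trivial in $\pi_1(W)$.
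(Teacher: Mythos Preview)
Your proposal is correct and matches the paper's approach exactly: the paper simply declares the lemma ``a direct consequence of Lemma \ref{lem:non_primitive_2}'' together with the preceding remark on triviality in $\pi_1(W)$, and you have spelled out precisely those deductions. The only minor difference is that uniqueness could be read off even more directly from the two cases of Lemma \ref{lem:non_primitive_2} (any non-separating $\widetilde{E}_0 \not\simeq E_0$ is either disjoint from $E_0$, hence primitive and not reducing by (1), or meets $E_0$, hence not reducing by (2)), but your route through the $\pi_1(W)$ remark is equivalent.
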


Of course, the same result we have for the reducing disk and a primitive disk in $W$.

\begin{lemma}
Any primitive pair has a common dual disk.
\label{lem:common_dual}
\end{lemma}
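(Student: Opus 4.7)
Let $\{D,E\}$ be a primitive pair in $V$ and let $E'$ be a dual disk of $E$ in $W$. My goal is to produce a disk in $W$ meeting each of $\partial D$ and $\partial E$ in exactly one point; the strategy is to modify $E'$ (or combine it with a dual of $D$) so that its boundary hits $\partial D$ precisely once.

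First I would appeal to Lemma~\ref{lem:reducing_disk_and_primitive_disk} for the reducing disks $E_0 \subset V$ and $E'_0 \subset W$ disjoint from $E \cup E'$ with $\partial E_0 = \partial E'_0$. Since $E_0$ is the meridian of the solid torus $\cl(V - \Nbd(E))$, the pair $\{E, E_0\}$ cuts $V$ into a $3$-ball; because $D$ is disjoint from both $E$ and $E_0$ by primitivity (Lemma~\ref{lem:reducing_disk_and_primitive_disk}), the curve $\partial D$ lies on the $4$-holed sphere $\Sigma^* := \Sigma \setminus \Nbd(\partial E \cup \partial E_0)$. Choose $E'$ so as to minimize $k := |\partial E' \cap \partial D|$. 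If $k=1$, then $E'$ is already a common dual disk of $\{D,E\}$, and we are done.

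Suppose $k \geq 2$. I plan to take an outermost sub-arc of $\partial D$ in $\Sigma^*$ cut off by $\partial E'$, and band-sum $E'$ with $E'_0$ along this arc, following the template of the surgery performed in the proof of Lemma~\ref{lem:non_primitive_2}. The resulting disk $\widehat E'$ should be a new dual disk of $E$ with strictly fewer intersections with $\partial D$, contradicting the minimality of $k$. Suppose instead $k = 0$. Then $\partial D$ is disjoint from $\partial E'$; fix an auxiliary disk $D_0 \subset W$ disjoint from $E'$ with $\{E', D_0\}$ a cut system of $W$, and apply the word formalism of Lemma~\ref{lem:non_primitive} with $x=\partial E'$, $y=\partial D_0$. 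The word of $\partial D$ then contains no $x$-letters, and primitivity (Lemma~\ref{lem:primitive}) forces it to be $y^{\pm 1}$, so that $D_0$ is in fact a dual disk of $D$. After arranging $D_0$ to be disjoint from $\partial E$ via analogous band-sum reductions with $E'_0$, a band sum of $D_0$ and $E'$ along an arc in $\Sigma \setminus (\partial D \cup \partial E)$ then produces a disk in $W$ meeting each of $\partial D$ and $\partial E$ exactly once, as desired.

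The main obstacle is the band-sum reduction step in the $k \geq 2$ case: one must verify that $\widehat E'$ is essential, non-separating, still a dual disk of $E$ (meeting $\partial E$ in a single point), and has strictly fewer intersections with $\partial D$ than $E'$. This requires a careful outermost-arc analysis on $\Sigma^*$ that tracks how the band sum interacts with $\partial E$ and $\partial D$, mirroring the construction already carried out in the proof of Lemma~\ref{lem:non_primitive_2}; it is where the bulk of the technical work will lie.
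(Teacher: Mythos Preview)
Your approach is essentially the paper's: both proofs reduce to the band-sum construction already carried out in the first case of Lemma~\ref{lem:non_primitive_2}(1), and the paper's proof of Lemma~\ref{lem:common_dual} is in fact little more than a citation of that construction. Two observations will let you collapse your argument to the same length.

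First, the case $k=0$ cannot occur. In the $4$-holed sphere $\Sigma^*$ the curve $\partial D$ must separate the two holes coming from $\partial E$ from one another (otherwise $D$ would be a separating disk in $V$), while $\partial E'$ is an arc joining those two holes; hence $|\partial D\cap\partial E'|$ is odd and in particular positive. Your separate treatment of $k=0$, with the auxiliary disk $D_0$ and a further band sum, is therefore unnecessary.

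Second, your band-sum step is stated imprecisely: an ``outermost sub-arc of $\partial D$ cut off by $\partial E'$'' has both endpoints on $\partial E'$, so it cannot itself guide a band sum of $E'$ with $E'_0$. The clean formulation---exactly the one in the proof of Lemma~\ref{lem:non_primitive_2}(1)---is to observe that $\partial D$ is determined by an arc $\alpha$ in $\Sigma^*$ running from a hole of $\partial E_0$ to a hole of $\partial E$, and to band-sum $E'_0$ with $E'$ along the sub-arc of $\alpha$ from $\partial E_0$ to its first crossing with $\partial E'$. This reduces $|\alpha\cap\partial E'|$ by at least one while keeping the new disk a dual of $E$ disjoint from $E'_0$; iterating until $|\alpha\cap\partial\widehat E'|=0$ gives $|\partial D\cap\partial\widehat E'|=1$ automatically, so the minimization-and-case-split framework is not needed.
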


\begin{proof}
Let $\{D, E\}$ be a primitive pair of $V$, and let $E_0$ and $E'_0$ be the unique reducing disks in $V$ and $W$ respectively such that $\partial E_0 = \partial E'_0$.
Any dual disk $E'$ of $E$ is primitive in $W$, and hence is disjoint from $E'_0$ by Lemma \ref{lem:non_primitive_2}.
The primitive disk $D$ is disjoint from $E_0$ and $E$, thus as in the proof of Lemma \ref{lem:non_primitive_2} (a), we can find a common dual disk $\widehat{E}'$ of $D$ and $E$ by repeating the band sum constructions.
\end{proof}

\begin{lemma}
Let $E'$ be a common dual disk of a primitive pair $\{D, E\}$ of $V$.
Then there exist exactly two common dual disks, say $D'$ and $D''$, of $\{D, E\}$ disjoint from $E'$.
Further, $D'$ intersects $D''$ in a single arc.
\label{lem:two_common_duals}
\end{lemma}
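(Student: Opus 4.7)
The plan is to reduce the problem to a combinatorial count of curves in the 4-holed sphere $\Sigma^{*} := \Sigma \setminus N(\partial E' \cup \partial E'_0)$, where $E'_0$ is the unique reducing disk of $W$. First I would verify that $\Sigma^{*}$ is indeed a 4-holed sphere. By the $W$-analog of Lemma~\ref{lem:non_primitive_2}, the primitive disk $E'$ is disjoint from $E'_0$; since $E', E'_0$ are disjoint and non-isotopic (primitive versus reducing), their boundaries are non-parallel in $\Sigma$; and $\partial E'_0$ is essential and non-peripheral in $\Sigma \setminus N(\partial E')$, so $E'_0$ is a meridian of the solid torus $W \setminus N(E')$. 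Thus $\{E', E'_0\}$ is a cut system of $W$ and $\Sigma^{*}$ is a 4-holed sphere with boundary components $c_\pm$ (from $\partial E'$) and $d_\pm$ (from $\partial E'_0$). Since $\partial D$ and $\partial E$ each meet $\partial E'$ once and are disjoint from $\partial E'_0$ (by Lemma~\ref{lem:non_primitive_2} applied in $V$), they restrict in $\Sigma^{*}$ to disjoint arcs $a_D, a_E$, each running from $c_+$ to $c_-$.

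The heart of the argument is that any common dual $D^{*}$ of $\{D,E\}$ disjoint from $E'$ is primitive in $W$ (being dual to $D$), hence also disjoint from $E'_0$ by Lemma~\ref{lem:non_primitive_2}, so $\partial D^{*}$ is a simple closed curve in $\Sigma^{*}$. Essential simple closed curves in a 4-holed sphere fall, up to isotopy, into four peripheral classes together with three non-peripheral classes indexed by pair-partitions of $\{c_+, c_-, d_+, d_-\}$. The conditions $|\partial D^{*} \cap a_D| = |\partial D^{*} \cap a_E| = 1$ force $c_+$ and $c_-$ to be separated by $\partial D^{*}$, which rules out the peripheral classes around $d_\pm$ and the partition $\{c_+,c_-\}\mid\{d_+,d_-\}$; the peripheral classes around $c_\pm$ yield only the isotopy class of $\partial E'$ in $\Sigma$, i.e. $E'$ itself, which is excluded. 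The two remaining candidates correspond to the partitions $\{c_+,d_+\}\mid\{c_-,d_-\}$ and $\{c_+,d_-\}\mid\{c_-,d_+\}$. I would then verify that each genuinely bounds a disk in $W$ by realizing it as the boundary of a band sum of $E'$ and $E'_0$ in $W$ along an arc in $\Sigma^{*}$ from $c_+$ to $d_+$ or to $d_-$, producing the two common dual disks $D'$ and $D''$ explicitly.

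For the intersection count, $\partial D'$ and $\partial D''$ are distinct separating curves with different pair-partitions in the 4-holed sphere $\Sigma^{*}$, and such curves realize minimal geometric intersection $2$ in $\Sigma^{*}$ (which is the relevant count since both $\partial D'$ and $\partial D''$ must remain in $\Sigma^{*}$ for the disks to be disjoint from $E'$). After isotoping $D', D''$ in $W$ to meet minimally and applying innermost-disk arguments to eliminate closed curves of intersection, only properly embedded arcs remain; the two boundary intersection points are exactly the endpoints of one arc, giving $|D' \cap D''| = 1$ arc. The main subtlety I anticipate is the verification that $\{E', E'_0\}$ is a cut system of $W$ (so that $\Sigma^{*}$ really is a 4-holed sphere rather than a disconnected surface) and the check that the two non-peripheral candidates actually bound disks in $W$; both issues are handled by the solid-torus structure of $W \setminus N(E')$ together with the explicit band-sum construction.
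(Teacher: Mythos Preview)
Your setup matches the paper's exactly: cut $\Sigma$ along $\partial E'$ and $\partial E'_0$ to get a $4$-holed sphere and view $\partial D,\partial E$ as disjoint arcs joining the two $E'$-holes. The paper then simply observes (with a picture) that there are exactly two circles in this $4$-holed sphere meeting each of $\partial D$ and $\partial E$ once.

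However, your combinatorial count contains a genuine error. The assertion that ``essential simple closed curves in a $4$-holed sphere fall, up to isotopy, into four peripheral classes together with three non-peripheral classes indexed by pair-partitions'' is false: the curve complex of $\Sigma_{0,4}$ is the Farey graph, so there are infinitely many isotopy classes of non-peripheral curves, infinitely many realizing each of the three pair-partitions. Consequently, using the conditions $|\partial D^{*}\cap a_D|=|\partial D^{*}\cap a_E|=1$ only to exclude one partition is not enough; you must use them to single out one curve within each of the two surviving partitions. The clean way to do this is to note that $a_D\cup a_E$ cuts $\Sigma^{*}$ into two annuli (with cores $d_+$ and $d_-$), so $\partial D^{*}$ is cut into two arcs, one in each annulus, each with endpoints on the $a_D$- and $a_E$-segments of the outer boundary; since every properly embedded arc in an annulus with both endpoints on one boundary circle is boundary-parallel, each annulus contributes exactly two isotopy classes (according to which of the $c_\pm$-subarcs it cuts off), and of the four gluings two are peripheral around $c_\pm$ and the remaining two are the desired $\partial D',\partial D''$.

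A smaller issue: your intersection argument appeals to the claim that any two curves realizing different pair-partitions have minimal geometric intersection $2$, but in $\Sigma_{0,4}$ this number can be arbitrarily large. What you need is that \emph{these particular} $\partial D'$ and $\partial D''$ meet twice, which follows immediately once you have the explicit description above (each meets $a_D$ and $a_E$ once, and in each annulus the two arcs cross once). With that fix, your innermost-disk argument for $|D'\cap D''|=1$ arc is fine.
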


\begin{proof}
Let $E_0$ and $E'_0$ be the unique reducing disks in $V$ and $W$, respectively,
such that $\partial E_0 = \partial E'_0$,
and let $\Sigma''$ be the $4$-holed sphere $\Sigma$ cut off by $\partial E' \cup \partial E'_0$.
Then $\partial E$ and $\partial D$ are disjoint arcs properly
embedded in $\Sigma'$ connecting the two holes coming from $\partial E'$.
The boundary circle of any common dual disk disjoint from $E'$ also lies in $\Sigma''$ and intersects each of $\partial E$ and $\partial D$ in a single point.
Thus there exist exactly two such circles $\partial D'$ and $\partial D''$ illustrated in Figure \ref{fig_common_dual}, which bound two disks intersecting each other in a single arc.
\end{proof}

\begin{figure}
\centering
\labellist
\pinlabel {\scriptsize $\partial E'$} [B] at 42 95
\pinlabel {\scriptsize $\partial E'$} [B] at 157 96
\pinlabel {\small $\partial E'_0$} [B] at 98 95
\pinlabel {\small $\partial E'_0$} [B] at 209 95
\pinlabel {\small $\partial E$} [B] at 52 131
\pinlabel {\small $\partial D$} [B] at 151 60
\pinlabel {\small $\partial D'$} [B] at 70 30
\pinlabel {\small $\partial D''$} [B] at 138 155
\endlabellist
\includegraphics[width=0.45\textwidth]{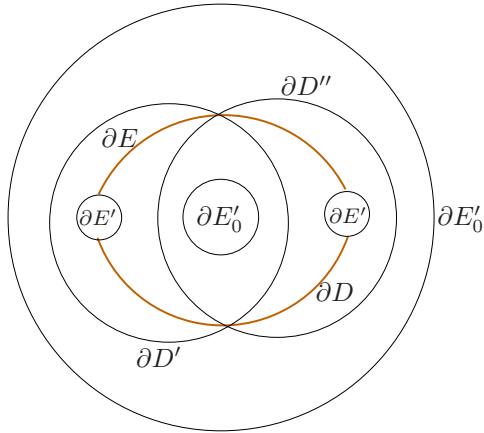}
\caption{
The boundary circles of the two common dual disks $D'$ and $D''$ disjoint from $E'$ in the $4$-holed sphere $\Sigma''$.}
\label{fig_common_dual}
\end{figure}

\section{The complex of primitive disks}
\label{sec:primitive_disk_complex}

Let $M$ be an irreducible $3$-manifold with compressible boundary.
The {\it disk complex} $\mathcal K(M)$ of $M$ is a simplicial complex defined as follows.
The vertices of $\mathcal K(M)$ are isotopy classes of essential disks in $M$, and a collection of $k+1$ vertices spans a $k$-simplex if and only if it admits a collection of representative disks which are pairwise disjoint.
Let $\mathcal D(M)$ be the full subcomplex of $\mathcal K(M)$ spanned by the vertices of non-separating disks, which we will call the {\it non-separating disk complex} of $M$.
It is well known that $\mathcal K(M)$ and $\mathcal D(M)$ are both contractible.
For example, see Theorems 5.3 and 5.4 in \cite{McC}.

In particular, for a genus-$2$ handlebody $V$, the disk complex $\mathcal K(V)$ is a $2$-dimensional simplicial complex which is not locally finite.
Further, we have a precise description of the non-separating disk complex $\mathcal D(V)$ of $V$ as follows.
First, it is easy to verify that $\mathcal D(V)$ is $2$-dimensional, and every edge of $\mathcal D(V)$ is contained in infinitely but countably many $2$-simplices.
Next, $\mathcal D(V)$ itself is contractible, and also the link of any vertex of $\mathcal D(V)$ is a tree each of whose vertices has infinite valency.
This is a direct consequence of Theorem 4.2 in \cite{C}.
Figure \ref{fig_disk_complex} illustrates a portion of $\mathcal D(V)$.
We note that the entire disk complex $\mathcal K(V)$ of $V$ is constructed by attaching infinitely (but countably) many $2$-simplices to each edge of $\mathcal D(V)$, where each of the new vertices is represented by an essential separating disk in $V$.

\begin{figure}
\centering
\labellist
\pinlabel {\small $E_0$} [B] at 108 28
\endlabellist
\includegraphics[width=0.5\textwidth]{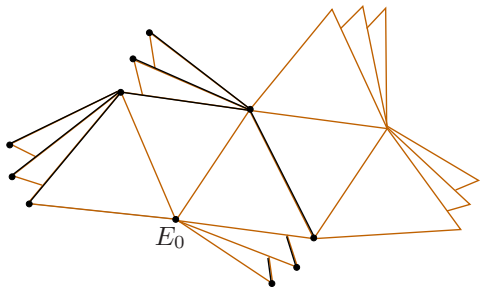}
\caption{A portion of $\mathcal D(V)$. The primitive disk complex $\mathcal P(V)$ is the link of the vertex of the unique reducing disk $E_0$.}
\label{fig_disk_complex}
\end{figure}

Given a genus-$2$ Heegaard splitting $(V, W; \Sigma)$ of $\mathbb S^2 \times \mathbb S^1$, we define the {\it primitive disk complex} $\mathcal P(V)$ to be the full sub-complex of $\mathcal D(V)$ spanned by the vertices of primitive disks in $V$.
By Lemma \ref{lem:reducing_disk_and_primitive_disk}, $\mathcal P(V)$ is the link in $\mathcal D(V)$ of the vertex represented by the unique reducing disk $E_0$ in $V$ (see Figure \ref{fig_disk_complex}), which implies our key result.

\begin{theorem}
The primitive disk complex $\mathcal P(V)$ is an infinite tree each of whose vertices has infinite valency.
\label{thm:primitive_disk_complex}
\end{theorem}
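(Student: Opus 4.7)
The plan is to deduce Theorem \ref{thm:primitive_disk_complex} as a direct consequence of Lemma \ref{lem:reducing_disk_and_primitive_disk} combined with the structural result on $\mathcal D(V)$ already recalled in the excerpt (Theorem 4.2 in \cite{C}), which states that the link of every vertex of $\mathcal D(V)$ is a tree each of whose vertices has infinite valency.

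First I would identify $\mathcal P(V)$ with a link inside $\mathcal D(V)$. By Lemma \ref{lem:reducing_disk_and_primitive_disk}, a non-separating disk $E$ in $V$ is primitive if and only if $E$ is not isotopic to $E_0$ and admits a representative disjoint from $E_0$; equivalently, the vertices of $\mathcal P(V)$ are precisely the vertices of $\mathcal D(V)$ that span an edge with the vertex $[E_0]$. Moreover, since $\mathcal P(V)$ is by definition the full subcomplex of $\mathcal D(V)$ spanned by these vertices, a collection of primitive disks spans a simplex of $\mathcal P(V)$ exactly when, together with $E_0$, they can be realized pairwise disjointly, i.e.\ when they span a simplex of the link of $[E_0]$ in $\mathcal D(V)$. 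Hence $\mathcal P(V) = \operatorname{link}_{\mathcal D(V)}([E_0])$.

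Second, I would apply Theorem 4.2 of \cite{C} to the vertex $[E_0]$ of $\mathcal D(V)$. That theorem asserts that every vertex link in the non-separating disk complex of a genus-$2$ handlebody is a tree with every vertex of infinite valency; specializing to $[E_0]$ gives that $\mathcal P(V)$ is a tree each of whose vertices has infinite valency, and in particular it is infinite.

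There is essentially no serious obstacle: all of the content has been pushed into Lemma \ref{lem:reducing_disk_and_primitive_disk} (existence and uniqueness of the reducing disk, and the equivalence between being primitive and being disjoint from $E_0$) and into the cited Theorem 4.2 of \cite{C}. The only point that needs a line of verification is the equality $\mathcal P(V) = \operatorname{link}_{\mathcal D(V)}([E_0])$, which amounts to matching the definition of a full subcomplex spanned by a set of vertices with the definition of a vertex link in a flag-like complex; this is immediate from the fact that simplices of $\mathcal D(V)$ are determined by pairwise disjointness of representatives.
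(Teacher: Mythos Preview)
Your proposal is correct and follows essentially the same route as the paper: the paper simply observes, via Lemma \ref{lem:reducing_disk_and_primitive_disk}, that $\mathcal P(V)$ equals the link of the vertex $[E_0]$ in $\mathcal D(V)$, and then invokes the structural description of $\mathcal D(V)$ (Theorem 4.2 in \cite{C}) to conclude. Your extra line checking that the full subcomplex spanned by primitive disks coincides with the link is a harmless elaboration of what the paper leaves implicit.
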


We note here that in \cite{C}, \cite{C2}, \cite{C-K}, \cite{C-K2} and \cite{Kod},
primitive disk complexes are defined in the same way under various settings, and
they are used to obtain presentations of Goeritz groups or their generalizations of given manifolds.

Obviously, the primitive disk complex $\mathcal P(W)$ of $W$ is isomorphic to $\mathcal P(V)$.
Given a primitive disk $E$ of $V$, define the sub-complex $\mathcal P_{\{E\}}(W)$ of $\mathcal P(W)$ to be the full sub-complex spanned by the vertices of dual disks of $E$.
Similarly, $\mathcal P_{\{D, E\}}(W)$ is the full sub-complex of $\mathcal P(W)$ spanned by the vertices of common dual disks of a primitive pair $\{D, E\}$ of $V$.

\begin{theorem}
The sub-complexes $\mathcal P_{\{E\}}(W)$ and $\mathcal P_{\{D, E\}}(W)$ are both infinite trees.
Each vertex of $\mathcal P_{\{E\}}(W)$ has infinite valency, and each vertex of $\mathcal P_{\{D, E\}}(W)$ has valency exactly two.
\label{thm:common_dual_disk_complex}
\end{theorem}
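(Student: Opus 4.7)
Both $\mathcal P_{\{E\}}(W)$ and $\mathcal P_{\{D,E\}}(W)$ are, by definition, full sub-complexes of the primitive disk complex $\mathcal P(W)$, which by the $W$-analogue of Theorem \ref{thm:primitive_disk_complex} is an infinite tree. A full sub-complex of a tree is automatically a forest, so to conclude that each of these sub-complexes is an infinite tree it suffices to establish connectedness together with the claimed valency at every vertex. The valency-two claim for $\mathcal P_{\{D,E\}}(W)$ is already contained in Lemma \ref{lem:two_common_duals}, since the two common dual disks of $\{D,E\}$ disjoint from a given vertex $E'$ are precisely its neighbors in $\mathcal P_{\{D,E\}}(W)$.

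For the infinite valency of $\mathcal P_{\{E\}}(W)$ at a vertex $E'$, I would cut $W$ along $E'$ to obtain a solid torus $W'$ whose boundary torus contains the two copies $E'_+, E'_-$ of $E'$ together with an arc $a$, the trace of $\partial E$, joining $E'_+$ to $E'_-$ (because $|\partial E\cap\partial E'|=1$). A dual disk of $E$ disjoint from $E'$ then corresponds exactly to a meridian disk of $W'$ whose boundary lies in the twice-holed torus $\partial W'\setminus (E'_+\cup E'_-)$ and meets $a$ in a single point. Starting from one such boundary curve, iterated band sums with loops around $E'_+$ or $E'_-$ taken along arcs of the twice-holed torus disjoint from $a$ produce an infinite family of pairwise non-isotopic simple closed curves, each of meridional slope and still meeting $a$ exactly once, hence infinitely many pairwise non-isotopic dual disks of $E$ disjoint from $E'$.

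For connectedness I would adapt the outermost sub-disk surgery of Section \ref{sec:primitive_disks}, inducting on $|E'_1\cap E'_2|$ for two vertices $E'_1,E'_2$ of the relevant sub-complex. If the intersection is empty the two disks are either equal or adjacent. Otherwise pick an outermost sub-disk $C\subset E'_1$ cut off by $E'_1\cap E'_2$, with $\partial C=a_C\cup\gamma_C$, $a_C\subset E'_1\cap E'_2$, $\gamma_C\subset\partial E'_1$. Surgery on $E'_2$ along $C$ produces two disks $F_1,F_2$ disjoint from $E'_2$ with strictly fewer intersections with $E'_1$, and a direct count yields
\[
|\partial F_1\cap\partial E|+|\partial F_2\cap\partial E|=|\partial E'_2\cap\partial E|+2|\gamma_C\cap\partial E|=1+2|\gamma_C\cap\partial E|,
\]
with the same equality holding with $\partial D$ in place of $\partial E$ in the $\{D,E\}$-case. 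The main obstacle is to choose $C$ so that $\gamma_C$ avoids the single point $\partial E'_1\cap\partial E$ and, for $\mathcal P_{\{D,E\}}(W)$, also $\partial E'_1\cap\partial D$: for then exactly one of $F_1,F_2$ meets $\partial E$ (resp.\ meets both $\partial D$ and $\partial E$) in one point and is a dual disk of $E$ (resp.\ a common dual of $\{D,E\}$). Distinct outermost sub-disks of $E'_1$ have pairwise disjoint boundary arcs on $\partial E'_1$, so at most one such arc contains any given marked point, and whenever $E'_1\cap E'_2$ has enough arcs a valid $C$ exists; the degenerate case when $E'_1\cap E'_2$ is a single arc is handled by exchanging the roles of $E'_1$ and $E'_2$, or by first performing a surgery to increase the arc count. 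Once $C$ is chosen, the selected $F_i$ is adjacent to $E'_2$ and, by the inductive hypothesis, connected through the sub-complex to $E'_1$, closing the induction.
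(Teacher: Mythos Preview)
Your overall strategy matches the paper's: both sub-complexes are full in the tree $\mathcal P(W)$, hence are forests, so it suffices to prove connectedness together with the valency statements. Your treatment of the valencies is fine (the paper simply declares the infinite-valency claim ``clear'' and invokes Lemma~\ref{lem:two_common_duals} for the valency-two claim), and your surgery argument for the connectedness of $\mathcal P_{\{E\}}(W)$ is essentially the paper's, though you phrase it via a careful choice of $C$ rather than via the reducing disk.

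The gap is in the connectedness of $\mathcal P_{\{D,E\}}(W)$. You want an outermost sub-disk $C\subset E'_1$ whose boundary arc $\gamma_C$ misses \emph{both} marked points $\partial E'_1\cap\partial D$ and $\partial E'_1\cap\partial E$, but there may be only two outermost sub-disks even when $|E'_1\cap E'_2|$ is large (for instance when the intersection arcs are nested), and each of the two outer arcs can contain exactly one marked point. Your proposed escape routes do not work in general: the same obstruction can occur symmetrically after swapping $E'_1$ and $E'_2$, and surgery never \emph{increases} the number of intersection arcs, so ``first performing a surgery to increase the arc count'' is not available.

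The paper avoids this difficulty with one structural observation specific to $\mathbb S^2\times\mathbb S^1$: for \emph{any} outermost sub-disk $C$, one of the two disks $F_1,F_2$ produced by the surgery is isotopic to the unique reducing disk $E'_0$ of $W$. Indeed, since $E'_1,E'_2$ are primitive in $W$ they are disjoint from $E'_0$ by Lemma~\ref{lem:reducing_disk_and_primitive_disk}, hence so are $F_1$ and $F_2$; if neither $F_i$ were isotopic to $E'_0$, then $E'_0,F_1,F_2,E'_2$ would be four pairwise disjoint, pairwise non-isotopic non-separating disks in the genus-$2$ handlebody $W$, which is impossible. Once you know $F_1\simeq E'_0$ and that $\partial E'_0$ is disjoint from $\partial D\cup\partial E$, your own counting identity (together with $|\gamma_C\cap\partial E|,\,|\gamma_C\cap\partial D|\le 1$ and a parity check on algebraic intersection numbers) forces $|\partial F_2\cap\partial E|=|\partial F_2\cap\partial D|=1$, so $F_2$ is again a common dual disk and the induction proceeds with no restriction on the choice of $C$.
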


\begin{proof}
It is clear that $\mathcal P_{\{E\}}(W)$ is an infinite sub-complex of the tree $\mathcal P(W)$ whose vertices have infinite valency.
By Lemmas \ref{lem:common_dual} and \ref{lem:two_common_duals}, the sub-complex $\mathcal P_{\{D, E\}}(W)$ is also an infinite sub-complex of the tree $\mathcal P(W)$, and each vertex of $\mathcal P_{\{D, E\}}(W)$ has valency two.
Thus it remains to show that both of $\mathcal P_{\{E\}}(W)$ and $\mathcal P_{\{D, E\}}(W)$ are connected.

Let $E'$ and $D'$ be any two non-isotopic dual disks of $E$.
If $E'$ is disjoint from $D'$, then the two vertices represented by them are joined by a single edge.
If $E'$ intersects $D'$, then it is easy to see that one of the disks from surgery on $E'$ along an outermost sub-disk of $D'$ cut off by $E' \cap D'$ is the unique reducing disk $E'_0$ in $W$, and the other one, say $E''$ is still a dual disk of $E$.
Notice that $E''$ is disjoint from $E'$ and $|D' \cap E''| < |D' \cap E'|$.
By repeating surgery construction, we find a finite sequence of dual disks of $E$ from $E'$ to $D'$, which realizes a path in $\mathcal P_{\{E\}}(W)$ joining the two vertices of $D'$ and $E'$.
Thus $\mathcal P_{\{E\}}(W)$ is connected.
The connectivity of $\mathcal P_{\{D, E\}}(W)$ also can be shown in a similar way, by considering surgery on a common dual disk.
\end{proof}

\begin{remark}
It is interesting to compare the sub-complexes $\mathcal P_{\{D, E\}}(W)$ of common dual disks for genus-$2$ Heegaard splittings of the $3$-sphere $\mathbb S^3$, the lens spaces $L(p, q)$, $p \geq 2$, and $\mathbb S^2 \times \mathbb S^1$.
The following results are known from \cite{C} and \cite{C-K}.
In the case of $\mathbb S^3$, there exist infinitely many common dual disks, and each two of them intersect each other.
Thus we see that $\mathcal P_{\{D, E\}}(W)$ is a collection of infinitely many vertices.
For the lens space $L(p, 1)$, if $p = 2$, then there exist exactly two common dual disks disjoint from each other.
Thus $\mathcal P_{\{D, E\}}(W)$ is a single edge.
If $p \geq 3$, then there exists a unique common dual disk, and so $\mathcal P_{\{D, E\}}(W)$ is a single vertex.
For the other lens spaces, $\mathcal P_{\{D, E\}}(W)$ is either a vertex or the empty set depending on the choice of $\{D, E\}$, and both exist infinitely many.
\end{remark}

\section{The genus two Goeritz group of $\mathbb S^2 \times \mathbb S^1$}
\label{sec:presentations}

In this section, we prove the main theorem, Theorem \ref{thm:presentation}.
We know that the primitive disk complex $\mathcal P(V)$ is a tree from Theorem \ref{thm:primitive_disk_complex}, which is the link in $\mathcal D(V)$ of the vertex of the unique reducing disk $E_0$.
Let $\mathcal T$ be the barycentric subdivision of $\mathcal P(V)$, which is a bipartite tree.
In Figure \ref{fig_tree}, the black vertices of $\mathcal T$ are the vertices of $\mathcal P(V)$, while the white ones are the barycenters of the edges of $\mathcal P(V)$.
The black vertices correspond to the primitive disks, and the white ones to the primitive pairs in $V$ consisting of the primitive disks representing the two adjoining black vertices.
For convenience, we will not distinguish disks (or pairs of disks) and homeomorphisms from their isotopy classes.
\begin{figure}
\centering
\includegraphics[width=0.45\textwidth]{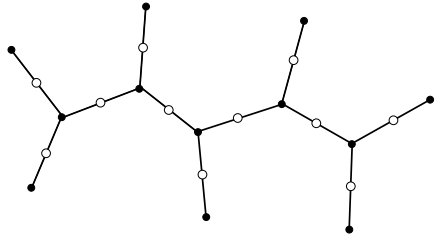}
\caption{
A portion of the tree $\mathcal T$, which is the barycentric subdivision of $\mathcal P(V)$.
The black vertices have infinite valency while white ones have valency two.}
\label{fig_tree}
\end{figure}
The group $\mathcal G$ acts on the tree $\mathcal T$ as a simplicial automorphism, and further we have the following.

\begin{lemma}
The group $\mathcal G$ acts transitively on each of the collections of black vertices and of white vertices of $\mathcal T$.
\label{lem:transitive}
\end{lemma}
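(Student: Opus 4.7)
The plan is to prove transitivity on the two types of vertices separately, starting with the black vertices, and then reducing the white-vertex case to a configuration where the standard primitive pair is already partially fixed.

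For transitivity on black vertices, let $E_1$ be an arbitrary primitive disk in $V$. Choose a dual disk $E_1' \subset W$, and let $E_0$, $E_0'$ be the unique reducing disks in $V$ and $W$, respectively, furnished by Lemma \ref{lem:reducing_disk_and_primitive_disk}. By the construction preceding Lemma \ref{lem:non_primitive_2}, we may assume $E_0 \cup E_0'$ is disjoint from $E_1 \cup E_1'$, so the quadruple $(E_0, E_1, E_0', E_1')$ is combinatorially identical to the standard quadruple $(E_0, E, E_0', E')$: cutting $V$ along $E_0$ produces a solid torus in which $E_1$ is a meridian (it is essential and non-isotopic to $E_0$), so $V$ cut along $E_0 \cup E_1$ is a $3$-ball, and symmetrically for $W$. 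The curves $\partial E_0 \cup \partial E_1 \cup \partial E_1'$ decompose $\Sigma$ into polygonal faces whose incidence pattern is completely determined by the intersection data ($\partial E_0 = \partial E_0'$ separating, $|\partial E_1 \cap \partial E_1'| = 1$, everything else disjoint). The same decomposition occurs for the standard quadruple, and an explicit homeomorphism of $\Sigma$ matching the two decompositions extends across the two pairs of $3$-balls by the Alexander trick. This yields $\phi \in \mathcal G$ with $\phi(E_1) = E$.

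For transitivity on white vertices, given a primitive pair $\{D_1, E_1\}$, first apply the step above to replace $E_1$ by $E$, so that we have a primitive pair $\{D_2, E\}$ with $D_2$ some primitive disk disjoint from $E$. We now seek an element of the stabilizer of $E$ in $\mathcal G$ sending $D_2$ to $D$. Choose a common dual disk $E_2'$ of $\{D_2, E\}$, which exists by Lemma \ref{lem:common_dual}. By the same Alexander-type reasoning, the configuration $(E_0, E, D_2, E_0', E_2')$ has the same combinatorial type as $(E_0, E, D, E_0', E')$: cutting $V$ along $E_0 \cup E$ gives a $3$-ball on whose boundary sphere $\partial D_2$ and $\partial D$ each realize the same essential arc pattern, while on the $W$ side, the common dual disk $E_2'$ plays the role of $E'$ (with Lemma \ref{lem:two_common_duals} ensuring that the choice of common dual among its two options only affects $\phi$ by a known element of $\mathcal G$). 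The resulting homeomorphism fixes $E$ and sends $D_2$ to $D$, completing the proof.

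The step I expect to be most delicate is verifying the combinatorial identification of the two decompositions of $\Sigma$, and in particular checking that the identification on the Heegaard surface is consistent enough to extend across both handlebodies simultaneously as an element of $\mathcal G$ (rather than merely as a homeomorphism of the underlying manifold). The uniqueness statements in Lemmas \ref{lem:reducing_disk_and_primitive_disk} and \ref{lem:two_common_duals}, together with the primitivity criterion of Lemma \ref{lem:primitive}, are what rule out topological ambiguities in this matching.
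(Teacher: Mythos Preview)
Your approach is genuinely different from the paper's and is largely sound in spirit, but the white-vertex step has a real gap.

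The paper does not use an Alexander-trick matching of configurations. Instead it exploits the connectivity of the trees already established: for black vertices it uses that $\mathcal P(V)$ is connected (Theorem~\ref{thm:primitive_disk_complex}), reducing to the case of an adjacent pair $\{D,E\}$ and then invoking the explicit element $\sigma$; for white vertices it uses connectivity of $\mathcal P_{\{E\}}(W)$ (Theorem~\ref{thm:common_dual_disk_complex}) to walk between dual disks via the explicit elements $\gamma$ and powers of $\beta$. This is more constructive---it shows \emph{which} generators realize the transitivity---and feeds directly into the stabilizer computations that follow.

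Your black-vertex argument is essentially correct: the triple $(\partial E_0,\partial E_1,\partial E_1')$ really is determined up to homeomorphism of $\Sigma$ by the intersection data (one checks this by observing that $\partial E_1\cup\partial E_1'$ has a once-holed torus neighborhood, with $\partial E_0$ a nonseparating curve in the complementary once-holed torus), and since the triple contains cut systems for both $V$ and $W$, the surface homeomorphism extends to an element of $\mathcal G$.

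The white-vertex case is where the argument breaks down as written. Your claim that the configuration $(E_0,E,D_2,E_0',E_2')$ has the same combinatorial type as the standard one is not justified by the intersection numbers alone: in the $4$-holed sphere $\Sigma\setminus(\partial E_0\cup\partial E)$, once the curve $\partial D_2$ is fixed, there are infinitely many isotopy classes of arcs from one $\partial E$-hole to the other crossing $\partial D_2$ once (they are distinguished, for instance, by the action of the Dehn twist about $\partial D_2$), and you have not shown that the constraint ``bounds a disk in $W$'' pins down a single class. Your parenthetical appeal to Lemma~\ref{lem:two_common_duals} is also off: that lemma counts common duals \emph{disjoint from a given one}, not the total number---there are infinitely many common duals. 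So the existence of a homeomorphism of $\Sigma$ taking the whole quadruple to the standard quadruple is precisely the nontrivial point, and it is not established. The paper circumvents this by walking through the tree $\mathcal P_{\{E\}}(W)$ rather than attempting a one-shot matching.
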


\begin{proof}
The baricentric subdivision $\mathcal T$ of $\mathcal P(V)$ is a tree and so connected.
Thus, given any two primitive disks in $V$, there exists a sequence of primitive disks from one to another in which any two consecutive disks form a primitive pair in $V$.
Therefore, to see the transitivity of the action of $\mathcal G$ on the black vertices, it suffices to find an element of $\mathcal G$ sending a disk $D$ to a disk $E$ for any primitive pair $\{D, E\}$ in $V$.
Such an element does exist, since we have the element $\sigma$ in Section \ref{sec:primitive_disks}. (The element $\sigma$ sends $D$ to $E$ for the primitive pair $\{D, E\}$ in $V$.
In fact, $\sigma$ exchanges $D$ and $E$.)

Next, since $\mathcal T$ is connected again, given any two primitive pairs in $V$, there exists a sequence of primitive pairs from one to another in which any two consecutive pairs share a common primitive disk.
Thus, to see the transitivity of the action of $\mathcal G$ on the white vertices, it suffices to find an element of $\mathcal G$ sending a disk $D$ to a disk $F$ and fixing a disk $E$ for any two primitive pairs $\{D, E\}$ and $\{E, F\}$ in $V$.

Choose any common dual disks $D'$ and $F'$ of the pairs $\{D, E\}$ and $\{E, F\}$ respectively.
Since $\mathcal P_{\{E\}}(W)$ is also a tree by Theorem \ref{thm:common_dual_disk_complex}, there exists a sequence $D' = D'_0, D'_1,$ $\cdots,$ $D'_n = F'$ of dual disks of $E$ in which any two consecutive disks form a primitive pair in $W$.
Let $h_i$ be an element of $\mathcal G$ sending $D'_{i-1}$ to $D'_i$ and fixing $E$.
Such an element does exist, since we have the element $\gamma$ in Section \ref{sec:primitive_disks}. 
(In the description, the element  $\gamma$ fixes $E$ and exchanges two disjoint dual disks $D'$ and $E'$ of $E$.)
Then the composition $h = h_k h_{k-1} \cdots h_1$ sends $D'$ to $F'$ and fixes $E$.
We observe that $\{h(D), E\}$ and $\{E, F\}$ are primitive pairs and $F'$ is a common dual disk of them.
Then there exists an element $g$ of $\mathcal G$ sending $h(D)$ to $F$ and fixing $E$.
(The element $g$ will be a power of the element $\beta$ in Section \ref{sec:primitive_disks}, when we take $h(D)$, $E$ in $V$ and $F'$ in $W$ instead of $D'$, $E'$ in $W$  and $E$ in $V$ respectively.)
Then the composition $gh$ sends $D$ to $F$ and fixes $E$ as desired.
\end{proof}

From the lemma, we see that the quotient of $\mathcal T$ by the action of $\mathcal G$ is a single edge whose one vertex is black and another one white.
By the theory of groups acting on trees due to Bass and Serre \cite{S}, the group $\mathcal G$ can be expressed as the free product of the stabilizer subgroups of two end vertices with amalgamated stabilizer subgroup of the edge.

Throughout the section, $\mathcal G_{\{A_1, A_2, \cdots, A_k\}}$ will denote the subgroup of $\mathcal G$ of elements preserving each of $A_1, A_2, \cdots, A_k$ setwise, where $A_i$ will be (the isotopy classes of) disks or union of disks in $V$ or in $W$.
Then, given a primitive pair $\{D, E\}$ of $V$, the subgroups $\mathcal G_{\{E\}}$, $\mathcal G_{\{E \cup D\}}$ and $\mathcal G_{\{E, D\}}$ are the stabilizer subgroups of a black vertex, of a white vertex and of the edge of $\mathcal T$ joining them respectively.
Thus the Goeritz group $\mathcal G$ is the free product of $\mathcal G_{\{E\}}$ and $\mathcal G_{\{E \cup D\}}$ amalgamated by $\mathcal G_{\{E, D\}}$.

We first find presentations of the three stabilizer subgroups $\mathcal G_{\{E\}}$, $\mathcal G_{\{E \cup D\}}$ and $\mathcal G_{\{E, D\}}$.
As mentioned above, the basic idea is to describe the generators of the subgroups as homeomorphisms of the surface $\Sigma$ preserving the boundary circles of some primitive disks and reducing disks in $V$ and $W$.

\begin{lemma}
The stabilizer subgroup $\mathcal G_{\{E\}}$ has the presentation
$$\langle ~\epsilon~ \rangle \oplus \langle ~\alpha ~|~ \alpha^2 =1~\rangle \oplus \langle ~\beta, \gamma ~|~ \gamma^2 = 1 ~ \rangle.$$
\label{lem:black_stabilizer}
\par
\end{lemma}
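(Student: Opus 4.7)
The plan is to apply Bass-Serre theory to the action of $\mathcal{G}_{\{E\}}$ on a natural tree of dual disks of $E$. By Theorem \ref{thm:common_dual_disk_complex}, the complex $\mathcal{P}_{\{E\}}(W)$ is an infinite tree; let $\mathcal{T}_E$ denote its barycentric subdivision, on which $\mathcal{G}_{\{E\}}$ acts type-preservingly, with black vertices corresponding to dual disks of $E$ and white vertices to unordered pairs of disjoint dual disks of $E$. The first step is to verify that each of $\epsilon,\alpha,\beta,\gamma$ lies in $\mathcal{G}_{\{E\}}$: this is immediate from the descriptions in Section \ref{sec:primitive_disks}, since $\alpha$ and $\gamma$ fix $E$ by construction and $\beta$ and $\epsilon$ are supported away from $\partial E$. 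One must then check the relations $\alpha^{2}=\gamma^{2}=1$ together with the commutativity of $\epsilon$ and $\alpha$ with every other generator; the $\epsilon$-commutations follow from the fact that each of the remaining generators preserves $\partial E_{0}=\partial E'_{0}$, while the $\alpha$-commutations reflect the hyperelliptic symmetry visible in Figure \ref{fig_generators}.

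Second, I would show that the quotient of $\mathcal{T}_E$ under $\mathcal{G}_{\{E\}}$ is a single edge, mirroring the argument of Lemma \ref{lem:transitive}. Since $\mathcal{P}_{\{E\}}(W)$ is connected, transitivity on black vertices reduces to the claim that for any two disjoint dual disks of $E$ some element of $\mathcal{G}_{\{E\}}$ exchanges them, and the element $\gamma$ (suitably transported) does exactly this. Transitivity on white vertices reduces to finding, for any two white vertices sharing a black vertex $[E']$, an element of $\mathcal{G}_{\{E,E'\}}$ carrying one primitive pair to the other; the half-twist $\beta$ serves this purpose, since it rotates through the common dual disks of $\{D,E\}$ disjoint from $E'$. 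Fix a black vertex $[E']$ and an incident white vertex $[\{D'\cup E'\}]$ as orbit representatives.

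Third, I would identify the three stabilizers. The edge stabilizer $\mathcal{G}_{\{E,D',E'\}}$ preserves each of $E,D',E'$, and hence also the unique reducing disks $E_{0},E'_{0}$; one shows that it equals $\langle\epsilon\rangle\oplus\langle\alpha\mid\alpha^{2}\rangle$. The black vertex stabilizer $\mathcal{G}_{\{E,E'\}}$ allows $D'$ to vary, and is obtained by adjoining $\beta$, giving $\langle\epsilon\rangle\oplus\langle\alpha\mid\alpha^{2}\rangle\oplus\langle\beta\rangle$. The white vertex stabilizer $\mathcal{G}_{\{E,D'\cup E'\}}$ additionally permits the exchange of $D'$ and $E'$, realised by $\gamma$, and is $\langle\epsilon\rangle\oplus\langle\alpha\mid\alpha^{2}\rangle\oplus\langle\gamma\mid\gamma^{2}\rangle$. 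Applying the Bass-Serre structure theorem then presents $\mathcal{G}_{\{E\}}$ as the amalgamated product of these two vertex stabilizers along the edge stabilizer; the central summands $\langle\epsilon\rangle$ and $\langle\alpha\mid\alpha^{2}\rangle$ are common to all three and split off, while the remaining free factors combine to give $\langle\beta\rangle * \langle\gamma\mid\gamma^{2}\rangle = \langle\beta,\gamma\mid\gamma^{2}\rangle$, which is exactly the claimed presentation.

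The main obstacle is the precise computation of the three stabilizers: while it is clear that the listed generators are contained in each subgroup, ruling out any additional generators requires a careful surface-theoretic analysis of the homeomorphisms of $\Sigma$ preserving the relevant collection of boundary curves. Concretely, one must analyse the automorphisms of $\Sigma$ cut along $\partial E\cup\partial D'\cup\partial E'$ (a pair of pants after identification) that extend over $V\cup W$ and preserve $\partial E_{0}$, and check that the only resulting classes are those generated by $\epsilon$, $\alpha$, and the appropriate permutation of holes realised by $\beta$ or $\gamma$. Once this geometric step is in place, the Bass-Serre amalgamation is essentially formal.
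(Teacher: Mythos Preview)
Your proposal is correct and follows essentially the same route as the paper: act $\mathcal G_{\{E\}}$ on the barycentric subdivision of $\mathcal P_{\{E\}}(W)$, check the quotient is an edge, compute the three stabilizers $\mathcal G_{\{E,E'\}}$, $\mathcal G_{\{E,D'\cup E'\}}$, $\mathcal G_{\{E,D',E'\}}$, and amalgamate. The only cosmetic difference is that the paper presents $\mathcal G_{\{E,E'\}}$ using the generators $\beta,\beta',\epsilon$ as $\langle\epsilon\rangle\oplus\langle\beta,\beta'\mid(\beta\beta')^2=1,\ \beta\beta'=\beta'\beta\rangle$ and then invokes $\beta\beta'=\alpha$, whereas you write it directly as $\langle\epsilon\rangle\oplus\langle\alpha\mid\alpha^2\rangle\oplus\langle\beta\rangle$; these are the same group.
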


\begin{proof}
By Theorem \ref{thm:common_dual_disk_complex}, $\mathcal P_{\{E\}}(W)$ is the sub-tree of the tree $\mathcal P(W)$ spanned by the vertices of the dual disks of $E$.
The barycentric subdivision of $\mathcal P_{\{E\}}(W)$, which we denote by $\mathcal T_{\{E\}}$, is a bipartite tree, each of whose vertices corresponds to either a dual disk of $E$ or a pair of disjoint dual disks of $E$.
The subgroup $\mathcal G_{\{E\}}$ acts on $\mathcal T_{\{E\}}$, and its quotient is again a single edge.
Thus, fixing a primitive pair $\{D', E'\}$ of dual disks of $E$ as in Figure \ref{fig_generators}, the subgroup $\mathcal G_{\{E\}}$ can be expressed as the free product of stabilizer subgroups $\mathcal G_{\{E, E'\}}$ and $\mathcal G_{\{E, D' \cup E'\}}$ amalgamated by $\mathcal G_{\{E, D', E'\}}$.

First, consider the subgroup $\mathcal G_{\{E, E'\}}$.
This group also preserves each of $E_0$ and $E'_0$, and so $\mathcal G_{\{E, E'\}} = \mathcal G_{\{E, E', E_0, E_0'\}}$.
This group is generated by the elements $\beta$, $\beta'$ and $\epsilon$.
Thus we have a presentation of $\mathcal G_{\{E, E'\}}$ as
$\langle ~\epsilon ~ \rangle \oplus \langle ~\beta, \beta' ~|~ (\beta \beta')^2 = 1, ~\beta \beta' = \beta' \beta ~ \rangle$.

Next, the subgroup $\mathcal G_{\{E, D' \cup E'\}} = \mathcal G_{\{E, D' \cup E', E_0, E'_0\}}$ is generated by $\alpha$, $\gamma$ and $\epsilon$, and so it has the presentation
$\langle ~\epsilon ~ \rangle \oplus \langle ~\alpha ~|~ \alpha^2 =1~\rangle \oplus \langle ~\gamma ~|~ \gamma^2 = 1 ~ \rangle$.
In a similar way, the index-$2$ subgroup $\mathcal G_{\{E, D', E'\}}$ of $\mathcal G_{\{E, D' \cup E'\}}$ has the presentation
$\langle ~\epsilon ~ \rangle \oplus \langle ~\alpha ~|~ \alpha^2 =1~\rangle$.
Observing $\beta \beta' = \alpha$, we have the desired presentation of $\mathcal G_{\{E\}}$.
\end{proof}

\begin{lemma}
The stabilizer subgroup $\mathcal G_{\{D \cup E\}}$ has the presentation
$$\langle ~\epsilon~ \rangle \oplus \langle ~\alpha ~|~ \alpha^2 =1~\rangle \oplus \langle ~\sigma, \tau ~|~ \sigma^2 = 1, ~(\tau \sigma)^2 = 1 ~ \rangle.$$
\label{lem:white_stabilizer}
\par
\end{lemma}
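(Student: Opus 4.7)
My plan is to imitate the Bass--Serre argument of Lemma~\ref{lem:black_stabilizer}, applying it this time to the action of $\mathcal G_{\{D \cup E\}}$ on the sub-complex $\mathcal P_{\{D, E\}}(W)$. By Theorem~\ref{thm:common_dual_disk_complex}, this sub-complex is a tree each of whose vertices has valency exactly two, hence a bi-infinite line. Let $\mathcal T_{\{D, E\}}$ be its barycentric subdivision, a bipartite line whose black vertices represent common dual disks of $\{D, E\}$ and whose white vertices represent primitive pairs of such disks. As the two kinds of vertices are geometrically distinct objects, every simplicial automorphism of $\mathcal T_{\{D, E\}}$ preserves the bipartition, and in particular the action of $\mathcal G_{\{D \cup E\}}$ does so (ruling out edge inversions).

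The first step is transitivity. The element $\tau = \gamma\beta$ fixes each of $D$ and $E$ and sends $E'$ to $D'$, so it acts on $\mathcal P_{\{D,E\}}(W)$ as a translation by one step; iterating $\tau$ shows that $\langle \tau \rangle$ is already transitive on each colour class of $\mathcal T_{\{D, E\}}$. Hence the quotient of $\mathcal T_{\{D, E\}}$ by $\mathcal G_{\{D \cup E\}}$ is a single edge with two vertices of distinct colours, and Bass--Serre theory yields
$$\mathcal G_{\{D \cup E\}} \;\cong\; \mathcal G_{\{D \cup E, E'\}} \ast_{\mathcal G_{\{D \cup E, D', E'\}}} \mathcal G_{\{D \cup E, D' \cup E'\}},$$
taking the fundamental edge to be the one joining the black vertex $E'$ to the white vertex $\{D', E'\}$.

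Next I would identify each of the three stabilisers. A key preliminary computation is $\mathcal G_{\{D, E, D', E'\}} = \langle \epsilon \rangle \oplus \langle \alpha \mid \alpha^2 = 1 \rangle$: starting from the description of $\mathcal G_{\{E, E'\}}$ given in the proof of Lemma~\ref{lem:black_stabilizer}, the extra requirement of preserving $D$ eliminates the $\beta$-direction (both $\beta$ and $\beta'$ move $D$) and leaves the subgroup generated by $\alpha = \beta\beta'$ and $\epsilon$, which automatically fixes $D'$ as well. To pass from this common kernel to the two vertex stabilisers, I would consider the natural homomorphism to $S_2 \times S_2$ recording which of the two pairs $\{D, E\}$ and $\{D', E'\}$ an element exchanges. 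A simplicial argument on the line $\mathcal T_{\{D, E\}}$ excludes an element swapping exactly one pair: such an element would fix pointwise the edge joining $D'$ to $\{D', E'\}$, hence every vertex of the whole line, and therefore lie in $\mathcal G_{\{D, E, D', E'\}}$, which swaps nothing. The element $\sigma$ supplies a representative of the non-trivial coset of $\mathcal G_{\{D \cup E, E'\}}/\mathcal G_{\{D, E, D', E'\}}$, and $\tau' = \tau\sigma$ supplies one for $\mathcal G_{\{D \cup E, D' \cup E'\}}/\mathcal G_{\{D, E, D', E'\}}$. Each is an involution, so
$$\mathcal G_{\{D \cup E, E'\}} = \langle \epsilon \rangle \oplus \langle \alpha \mid \alpha^2 = 1 \rangle \oplus \langle \sigma \mid \sigma^2 = 1 \rangle,$$
$$\mathcal G_{\{D \cup E, D' \cup E'\}} = \langle \epsilon \rangle \oplus \langle \alpha \mid \alpha^2 = 1 \rangle \oplus \langle \tau' \mid (\tau')^2 = 1 \rangle.$$

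Finally I would assemble the amalgamated free product to obtain
$$\langle \epsilon \rangle \oplus \langle \alpha \mid \alpha^2 = 1 \rangle \oplus \langle \sigma, \tau' \mid \sigma^2 = (\tau')^2 = 1 \rangle,$$
which becomes the claimed presentation once $\tau'$ is rewritten as $\tau \sigma$. The main technical obstacle I anticipate is verifying the commutation relations, particularly that $\sigma$ and $\tau'$ commute with $\epsilon$ rather than inverting it. This is what guarantees the direct-sum factor $\langle \epsilon \rangle$ in the answer instead of a semidirect product, and must be read off from the geometric descriptions of $\sigma$, $\tau$ and $\epsilon$ as homeomorphisms of $\Sigma$ that all preserve the oriented curve $\partial E_0$.
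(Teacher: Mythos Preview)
Your approach is essentially the same as the paper's: act with $\mathcal G_{\{D\cup E\}}$ on the barycentric subdivision of the tree $\mathcal P_{\{D,E\}}(W)$, get a single edge as quotient, and identify the three stabilisers $\mathcal G_{\{D\cup E,E'\}}$, $\mathcal G_{\{D\cup E,D'\cup E'\}}$, $\mathcal G_{\{D\cup E,D',E'\}}$ with the presentations you list. The paper computes these stabilisers top-down (first $\mathcal G_{\{D\cup E,E'\}}=\langle\epsilon,\alpha,\sigma\rangle$ directly, then restricts by the condition of fixing $D'$), whereas you compute $\mathcal G_{\{D,E,D',E'\}}$ first and build up via the $S_2\times S_2$ map; both routes arrive at the same place.

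One step in your write-up does not close as stated. In the ``swapping exactly one pair'' argument you conclude that an element of $\mathcal G_{\{D\cup E,D',E'\}}$ fixing the edge from $D'$ to $\{D',E'\}$ in $\mathcal T_{\{D,E\}}$ must fix the whole line and ``therefore lie in $\mathcal G_{\{D,E,D',E'\}}$''. Fixing every vertex of $\mathcal T_{\{D,E\}}$ only says the element fixes every \emph{common dual disk} of $\{D,E\}$; it does not by itself force $D$ and $E$ to be fixed individually. The paper sidesteps this by first computing $\mathcal G_{\{D\cup E,E'\}}=\langle\epsilon\rangle\oplus\langle\alpha\rangle\oplus\langle\sigma\rangle$ directly from the geometry, and then observing that among $\epsilon^a\alpha^b\sigma^c$ only those with $c=0$ preserve $D'$; this gives $\mathcal G_{\{D\cup E,D',E'\}}=\langle\epsilon,\alpha\rangle$ without needing the line argument. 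You can patch your version the same way (you already have the ingredients, since your computation of $\mathcal G_{\{D,E,D',E'\}}$ together with the element $\sigma$ determines $\mathcal G_{\{D\cup E,E'\}}$), but the inference as written is a gap.
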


\begin{proof}
By Theorem \ref{thm:common_dual_disk_complex} again, the full subcomplex $\mathcal P_{\{D\cup E\}}(W)$ of $\mathcal P(W)$ spanned by the vertices of common dual disks of the pair $\{D, E\}$ is a tree.
Denote by $\mathcal T_{\{D \cup E\}}$ the barycentric subdivision of $\mathcal P_{\{D\cup E\}}(W)$.
Then the quotient of $\mathcal T_{\{D \cup E\}}$ by the action of $\mathcal G_{\{D \cup E\}}$ is a single edge.
Thus, fixing a primitive pair $\{D', E'\}$ of common dual disks of $D$ and $E$, the subgroup $\mathcal G_{\{D \cup E\}}$ can be expressed as the free product of $\mathcal G_{\{D \cup E, E'\}}$ and $\mathcal G_{\{D \cup E, D' \cup E'\}}$ amalgamated by $\mathcal G_{\{D \cup E, D', E'\}}$.

Similarly to the case of $\mathcal G_{\{E, D' \cup E'\}}$ in the proof of Lemma \ref{lem:black_stabilizer}, $\mathcal G_{\{D \cup E, E'\}}$ is generated by $\alpha$, $\sigma$ and $\epsilon$, and hence has the presentation
$\langle ~\epsilon ~ \rangle  \oplus  \langle ~\alpha ~|~ \alpha^2 =1~\rangle \oplus \langle ~\sigma ~|~ \sigma^2 = 1 ~ \rangle$.
Next, $\mathcal G_{\{D \cup E, D', E'\}}$ is a subgroup of $\mathcal G_{\{D \cup E, E'\}}$, which does not contain $\sigma$ since $\sigma$ does not preserve $D'$.
Thus $\mathcal G_{\{D \cup E, D', E'\}}$ has the presentation $\langle ~\epsilon ~ \rangle  \oplus \langle ~\alpha ~|~ \alpha^2 =1~\rangle$.
Finally, recall the order two element $\tau' = \tau \sigma$ of $\mathcal G_{\{D \cup E, D' \cup E'\}}$ exchanging $D$ and $E$, and $D'$ and $E'$ respectively.
The subgroup $\mathcal G_{\{D \cup E, D' \cup E'\}}$ is the extension of $\mathcal G_{\{D \cup E, D', E'\}}$ by $\tau'$, so it has the presentation
$\langle ~\epsilon ~ \rangle  \oplus   \langle ~\alpha ~|~ \alpha^2 =1~\rangle \oplus \langle ~\tau' ~|~ \tau'^2 = 1 ~ \rangle$, and so the desired presentation of $\mathcal G_{\{D \cup E\}}$ is obtained.
\end{proof}

\begin{lemma}
The stabilizer subgroup $\mathcal G_{\{D,  E\}}$ has the presentation
$$\langle ~\epsilon~ \rangle \oplus \langle ~\tau ~ \rangle \oplus \langle ~\alpha ~|~ \alpha^2 =1 ~ \rangle.$$
\label{lem:edge_stabilizer}
\end{lemma}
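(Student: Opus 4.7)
The plan is to realise $\mathcal G_{\{D, E\}}$ as an index-$2$ subgroup of $\mathcal G_{\{D \cup E\}}$, whose presentation was obtained in Lemma \ref{lem:white_stabilizer}, and then extract the desired presentation by passing to the kernel of the ``swap $D$ and $E$'' homomorphism. Every element of $\mathcal G_{\{D \cup E\}}$ either preserves each of $D$ and $E$ or exchanges them, so there is a homomorphism $\phi : \mathcal G_{\{D \cup E\}} \to \mathbb{Z}/2$ whose kernel is exactly $\mathcal G_{\{D, E\}}$. The involution $\tau' = \tau \sigma$, which by Section \ref{sec:primitive_disks} exchanges $D$ and $E$, represents the non-trivial coset.

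First I would evaluate $\phi$ on the generators $\epsilon$, $\alpha$, $\sigma$, $\tau$ of $\mathcal G_{\{D \cup E\}}$. The generator $\tau$ preserves each of $D$ and $E$ (stated in Section \ref{sec:primitive_disks}), so $\phi(\tau) = 0$; combined with $\phi(\tau \sigma) = 1$, this forces $\phi(\sigma) = 1$. The Dehn twist $\epsilon$ about $\partial E_0$ is supported in an annular neighbourhood of $\partial E_0$, which by Lemma \ref{lem:reducing_disk_and_primitive_disk} is disjoint from $\partial D \cup \partial E$; hence $\epsilon$ fixes $D$ and $E$ and $\phi(\epsilon) = 0$. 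The hyperelliptic involution $\alpha$ of Figure \ref{fig_generators}(a) preserves each of $D$ and $E$ as sets, so $\phi(\alpha) = 0$. A quick check on the defining relators $\alpha^2$, $\sigma^2$, $(\tau\sigma)^2$ (each with even $\sigma$-exponent) shows that $\phi$ is a well-defined homomorphism.

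Next I would compute $\ker \phi$. Since $\mathcal G_{\{D \cup E\}} = \langle \epsilon \rangle \oplus \langle \alpha \mid \alpha^2 = 1 \rangle \oplus H$ with $H = \langle \sigma, \tau \mid \sigma^2 = (\tau \sigma)^2 = 1 \rangle$, and $\phi$ is trivial on the first two direct summands, we have
$$\ker \phi = \langle \epsilon \rangle \oplus \langle \alpha \mid \alpha^2 = 1 \rangle \oplus (H \cap \ker \phi).$$
The factor $H$ is the infinite dihedral group $\mathbb{Z}/2 \ast \mathbb{Z}/2$ generated by the two involutions $\sigma$ and $\tau \sigma$, and its unique ``orientation-preserving'' index-$2$ subgroup is infinite cyclic, generated by the product $(\tau \sigma) \cdot \sigma = \tau$. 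Since $\tau$ has infinite order in $\mathcal G$ (Section \ref{sec:primitive_disks}), this subgroup has presentation $\langle \tau \rangle \cong \mathbb{Z}$, yielding the claimed decomposition.

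The only substantive step is identifying $H \cap \ker \phi$ with $\langle \tau \rangle$; this is a standard fact about the index-$2$ rotation subgroup of the infinite dihedral group, and no hidden relations on $\tau$ appear because $\tau$ is already known to be of infinite order in $\mathcal G$.
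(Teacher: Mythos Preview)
Your proposal is correct and follows essentially the same approach as the paper: the paper's one-sentence proof simply observes that $\mathcal G_{\{D\cup E\}}$ is the extension of $\mathcal G_{\{D,E\}}$ by the order-$2$ element $\sigma$, which is exactly your kernel computation for the swap homomorphism $\phi$. You have just made explicit the routine identification of the index-$2$ rotation subgroup of the infinite dihedral factor with $\langle\tau\rangle$.
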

\par

\begin{proof}
The presentation of $\mathcal G_{\{D,  E\}}$ is obtained directly from the fact that $\mathcal G_{\{D \cup E\}}$ is the extension of $\mathcal G_{\{D,  E\}}$ by the order-2 element $\sigma$ exchanging $D$ and $E$.
\end{proof}

\begin{remark}
The subgroup $\mathcal G_{\{D,  E\}}$ acts on the tree $\mathcal T_{\{D \cup E\}}$.
Although this action is transitive on the collections of vertices of common dual disks and of pairs of common dual disks of $D$ and $E$ respectively, the quotient is not a single edge.
In fact, one can verify that the quotient is a single loop and can obtain the same presentation of $\mathcal G_{\{D,  E\}}$ to the above using this loop.
\end{remark}

Combining Lemmas \ref{lem:black_stabilizer}, \ref{lem:white_stabilizer} and \ref{lem:edge_stabilizer} and using $\tau = \gamma \beta$,
we obtain Theorem \ref{thm:presentation}.

\medskip

\noindent {\bf Acknowledgments.}
The second-named author wishes to thank Universit\`a di Pisa
for its kind hospitality.
The authors are grateful to the referee for helping them to improve the presentation.

\bibliographystyle{amsplain}

\end{document}